\def\BibTeX{{\rm B\kern-.05em{\sc i\kern-.025em b}\kern-.08em
    T\kern-.1667em\lower.7ex\hbox{E}\kern-.125emX}}
\tikzset{commutative diagrams/.cd}
\newlength\myheight
\newcommand*\circled[1]{\tikz[baseline=(char.base)]{
        \node[shape=circle,draw,minimum size=4mm, inner sep=0pt] (char)
        {\rule[-3pt]{0pt}{\dimexpr2ex+2pt}#1};}}
\theoremstyle{plain}
\newtheorem{theorem}{Theorem}
\newtheorem{lemma}[theorem]{Lemma}
\newtheorem{proposition}[theorem]{Proposition}
\newtheorem{corollary}[theorem]{Corollary}
\theoremstyle{definition}
\newtheorem{definition}[theorem]{Definition}
\newtheorem{remark}[theorem]{Remark}
\newtheorem{example}[theorem]{Example}
\def\endclaim{\ensuremath{\blacksquare}} 
\newcommand{\C}{{\mbox{${\mathcal C}$}}}
\DeclareMathOperator{\kernel}{ker}
\DeclareMathOperator{\image}{im}
\newcommand{\set}[2]{\{#1\;|\;#2\}}
\newcommand{\setin}[3]{\{#1\in#2\;|\;#3\}}
\newcommand{\ex}[2]{\exists_{#1}.\,#2}
\newcommand{\KSub}{\ensuremath{\mathrm{KSub}}}
\newcommand{\auv}[1]{``#1''}
\newcommand{\sai}[1]{[\,#1\,]}
\newcommand{\nul}{\ensuremath{\underline{0}}}
\newcommand{\downset}{\mathop{\downarrow}\!}
\newcommand{\komma}{,\hspace{0.3em}}
\newcommand{\adj}{^{*}}
\newcommand{\op}{\ensuremath{^{\mathrm{op}}}}
\newcommand{\idmap}[1][]{\ensuremath{\mathrm{id}_{#1}}}
\newcommand{\conjun}{\mathrel{\wedge}}
\newcommand{\disjun}{\mathrel{\vee}}
\newcommand{\cat}[1]{\ensuremath{\mathbf{#1}}}
\newcommand{\Cat}[1]{\ensuremath{\mathbf{#1}}}
\definecolor{darkgreen}{rgb}{0.0, 0.5, 0.0}
\newcommand{\lin}[1]{[#1]}
\definecolor{brightgreen}{rgb}{0.4, 1.0, 0.0}
\newcolumntype{?}{!{\vrule width 1pt}}
\begin{document}

\title{Foulis m-semilattices and their modules\\
}
\author{
\IEEEauthorblockN{Michal~Botur}
\IEEEauthorblockA{\textit{Department of Algebra and Geometry} \\
	\textit{Faculty of Science, Palack\'y University Olomouc }\\
	Olomouc, Czech Republic\\
	michal.botur@upol.cz}
\and
\IEEEauthorblockN{Jan~Paseka, Milan Lekár}
\IEEEauthorblockA{\textit{Department of Mathematics and Statistics} \\
\textit{Faculty of Science, Masaryk University}\\
Brno, Czech Republic\\
paseka@math.muni.cz}
}

\maketitle

\begin{abstract}
Building upon  results of Jacobs, we show that 
the category $\Cat{OMLatLin}$  of orthomodular lattices and linear maps forms a dagger category. For each orthomodular lattice $X$, we construct a Foulis m-semilattice $\Cat{Lin}(X)$ composed of endomorphisms of $X$. This m-semilattice acts as a quantale, enabling us to regard $X$ as a left $\Cat{Lin}(X)$-module. Our novel approach introduces a fuzzy-theoretic dimension to the theory of orthomodular lattices.
\end{abstract}

\begin{IEEEkeywords}
	quantale, quantale module, orthomodular lattice, 
        linear map, Sasaki projection, Foulis m-semilattice, m-semilattice module
\end{IEEEkeywords}

\section{Introduction}

Quantale modules offer a robust algebraic framework for studying fuzzy set theory and its generalizations. Originating from Mulvey's pioneering work~\cite{Mulvey86}, quantales emerged as non-commutative generalizations of locales, providing a rich mathematical structure that naturally accommodates various forms of fuzzy logic and reasoning.

Höhle~\cite{Hohle95} first systematically explored the connection between quantale modules and fuzzy sets~\cite{Zad65}, demonstrating that categories of fuzzy sets can be viewed as categories of modules over specific quantales. 
By representing fuzzy sets as elements of a quantale module, we can leverage the algebraic structure to define and analyze fuzzy operations rigorously. 
This perspective has proven instrumental in developing the theoretical foundations of many-valued and fuzzy logics.

Unital quantales, in particular, play a dual role as both truth-value sets for fuzzy logics and coefficient structures for their corresponding categories of modules~\cite{KrPaseka08}. This duality enables an elegant treatment of both the logical and set-theoretic aspects of fuzzy mathematics within a unified framework~\cite{BelohlAvek02}.

The module-theoretic approach offers several advantages: it naturally accommodates various t-norms and their associated logics, provides clear conceptual tools for handling grades of membership, and offers powerful categorical methods for studying fuzzy algebraic structures~\cite{Sol09}. Moreover, quantale modules have extended classical results from fuzzy set theory to more general settings.

Key applications include:

\begin{itemize}
    \item {\bfseries Fuzzy Logic}: Modeling fuzzy logical connectives and inference rules.
    \item {\bfseries Fuzzy Set Operations}: Defining and analyzing operations like union, intersection, and complement.
    \item {\bfseries Fuzzy Topology}: Constructing fuzzy topological spaces.
    \item {\bfseries Image Processing}: Applying fuzzy logic techniques for image processing tasks.
\end{itemize}

In this research, motivated by the aforementioned considerations, 
we investigate Foulis m-semilattices and modules over them, which will serve as quantale-like structures for orthomodular lattices. By doing so, we introduce a natural fuzzy-theoretic aspect to the theory of orthomodular lattices since every quantale 
module obtains a natural fuzzy order (see \cite{Sol09}).

The paper is structured as follows. 
After this introduction, Section~\ref{Preliminaries} 
provides some basic notions, notations, and results that will be used in the article. In particular, we introduce here a category \Cat{OMLatLin} of orthomodular
lattices and linear maps and study its properties. In Section~\ref{Foulis}, we introduce the notion of a Foulis 
m-semilattice and show that the m-semilattice $\Cat{Lin}(X)$ of endomorphisms of an orthomodular lattice $X$ is a Foulis 
m-semilattice. Conversely, every Foulis 
m-semilattice $S$ yields an orthomodular lattice $[S]$ of Sasaki projections of $S$. 

Moreover, in Section \ref{semmod} on m-semilattice modules we show first that, for an orthomodular lattice $X$, $X$ is a left $\Cat{Lin}(X)$-module. Second, the orthomodular lattice $[S]$ of Sasaki projections of a Foulis 
m-semilattice $S$ is a left $S$-module.

Our conclusions follow in Section~\ref{Conclusions}.

In what follows, we assume familiarity with the fundamental concepts and results of lattice and poset theory. For further details on these topics, readers are referred to 
the monographs \cite{Bi} by G.~Birkhoff and \cite{OML} by G.~Kalmbach. For concepts and results on quantales and quantale modules, we direct the reader
to \cite{KrPaseka08}, \cite{rosenthal1}, and \cite{russo}. 
The necessary categorical background can be found in \cite{Joy of cats}; we recommend \cite{Jac} for dagger categories.

\section{Basic concepts}\label{Preliminaries}

\begin{definition}
\label{OMLatDef}
A meet semi-lattice $(X,\conjun 1)$ is called an {\em ortholattice} if it
comes equipped with a function $(-)^{\perp}\colon X \to X$ satisfying:
\begin{itemize}
   \item $x^{\perp\perp} = x$;
   \item $x \leq y$ implies $y^\perp \leq x^\perp$;
   \item $x \conjun x^\perp = 1^\perp$.
\end{itemize}

\noindent One can then define a bottom element as $0 = 1 \conjun
1^{\perp} = 1^\perp$ and join by $x\disjun y = (x^{\perp}\conjun
y^{\perp})^{\perp}$, satisfying $x\disjun x^{\perp} = 1$.

We write $x\perp y$ if and only if $x\leq y^{\perp}$. 

Such an ortholattice is called {\em orthomodular} if it satisfies (one of)
the three equivalent conditions:
\begin{itemize}
\item $x \leq y$ implies $y = x \disjun (x^\perp \conjun y)$;

\item $x \leq y$ implies $x = y \conjun (y^\perp \disjun x)$;

\item $x \leq y$ and $x^{\perp} \conjun y = 0$ implies $x=y$.
\end{itemize}
\end{definition}

\begin{definition}
\label{DagcatDef}
A {\it dagger} on a category \C\ is a functor ${}^\star \colon \C\op \to \C$ that is involutive and the identity on objects. A category equipped with a dagger is called a {\it dagger category}.

Let \C\ be a dagger category. A morphism $f \colon A \to B$ is called a {\it dagger monomorphism} if $f^{\star} \circ f={\idmap}_A$, and $f$ is called a {\it dagger isomorphism} if 
$f^{\star} \circ f = {\idmap}_A$ and $f \circ f^\star = {\idmap}_B$. 
\end{definition}

We now introduce a new way of organising orthomodular lattices
into a dagger category.

\begin{definition}
The category \Cat{OMLatLin} has orthomodular
lattices as objects.
A morphism $f \colon X\rightarrow Y$ in \Cat{OMLatLin} is a 
function $f \colon X\rightarrow Y$ between the underlying sets such that 
there is a function $h \colon Y \to X$ and, 
for any $x \in X$ and $y \in Y$,
\[ f(x) \perp y \text{ if and only if } x \perp h(y). \]
In this case we say that $h$ is an {\it adjoint} of a {\em linear map} $f$.

It is clear that adjointness is a symmetric property: if a map $f$ possesses an adjoint $h$, then $f$ is also an adjoint of $h$. 
We denote $\Cat{Lin}(X,Y)$ the set of all linear maps from $X$ to $Y$.
If $X=Y$ we put $\Cat{Lin}(X)=\Cat{Lin}(X,X)$.

Moreover, a map $f \colon X \to X$ is called {\it self-adjoint} if $f$ is an adjoint of itself.

The identity morphism on $X$ is the self-adjoint identity map $\idmap \colon X\rightarrow X$. Composition of $\smash{X \stackrel{f}{\rightarrow} Y
  \stackrel{g}{\rightarrow} Z}$ is given by usual composition of maps.
\end{definition} 

We immediately see that \Cat{OMLatLin} is really a category. Namely, if $h$ is an adjoint 
of $f$ and $k$ is an adjoint of $g$ we have, for any $x \in X$ and $z \in Z$,
\begin{align*}
    g(f(x)) \perp z &\text{ if and only if } f(x) \perp k(z) \\ &\text{ if and only if }  
    x \perp h(k(z)).
\end{align*}
Hence $h\circ k$ is an adjoint of $g\circ f$. Moreover, for any $x, y \in X$, 
\begin{align*}
    \idmap(x) \perp y &\text{ if and only if } x \perp y\\  &\text{ if and only if }  
    x \perp \idmap(y)
\end{align*}
and $\idmap \colon X\rightarrow X$ is self-adjoint. 

Our guiding example is the following.

\begin{example}
Let $H$ be a Hilbert space. We denote the closed subspace spanned by a subset $S \subseteq H$ by $\lin S$. Let $C(H) = \{ \lin S \colon S \subseteq H \}$. Then $C(H)$ is an orthomodular lattice 
such that $\wedge=\cap$ and $P^{\perp}$ is the orthogonal complement of a closed subspace $P$
of $H$.

Let $f \colon H_1 \to H_2$ be a bounded linear map between Hilbert spaces 
and let $f\adj$ be the usual adjoint of $f$. Then the induced map 
$C(H_1) \to C(H_2) \komma \lin S \mapsto \lin{f(S)}$ has the adjoint 
$C(H_2) \to C(H_1) \komma \lin T \mapsto \lin{f\adj(T)}$.
\end{example}

\begin{lemma} \label{lem:lattice-adjoint}
Let $f \colon X \to Y$ and $h \colon Y \to X$ be maps between orthomodular
lattices. Then the following conditions are equivalent:
\begin{itemize}

\item[\rm (i)] $f\colon (X,\leq) \to (Y,\leq)$ has 
$\hat{h}={}^{\perp}\circ h\circ {}^{\perp}\colon (Y,\leq) \to (X,\leq)$
a right (order)-adjoint map.

\item[\rm (ii)] $f$ possesses the adjoint $h \colon Y \to X$. 
\end{itemize}
\end{lemma}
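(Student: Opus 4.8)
The plan is to unwind both conditions into statements purely about the order $\leq$ and the orthocomplementation $(-)^\perp$, and then observe that they coincide after a single change of variable. Recall that condition (ii), that $h$ is an adjoint of $f$, means by definition that $f(x) \perp y$ if and only if $x \perp h(y)$ for all $x \in X$ and $y \in Y$; since $a \perp b$ abbreviates $a \leq b^\perp$, this is the biconditional $f(x) \leq y^\perp \iff x \leq h(y)^\perp$. Meanwhile condition (i), that $\hat h = {}^{\perp}\circ h\circ{}^{\perp}$ is a right order-adjoint of $f$, unwinds to the Galois-connection biconditional $f(x) \leq y' \iff x \leq \hat h(y')$ for all $x \in X$ and $y' \in Y$, where $\hat h(y') = (h(y'^\perp))^\perp$.

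First I would prove (i) $\Rightarrow$ (ii). Assuming the adjunction biconditional $f(x) \leq y' \iff x \leq (h(y'^\perp))^\perp$, I fix arbitrary $x \in X$ and $y \in Y$ and specialise $y' := y^\perp$. Using $y^{\perp\perp} = y$, the right-hand side becomes $x \leq h(y)^\perp$ while the left-hand side becomes $f(x) \leq y^\perp$, which is precisely $f(x) \perp y \iff x \perp h(y)$, establishing (ii).

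Next, for (ii) $\Rightarrow$ (i), I run the same computation in reverse. Assuming $f(x) \leq y^\perp \iff x \leq h(y)^\perp$ for all $x$ and $y$, I fix arbitrary $x \in X$ and $y' \in Y$ and set $y := y'^\perp$, so that $y^\perp = y'^{\perp\perp} = y'$. The hypothesis then reads $f(x) \leq y' \iff x \leq (h(y'^\perp))^\perp = \hat h(y')$, which is exactly the defining biconditional of the order-adjunction $f \dashv \hat h$.

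The crux of both directions is the reparametrisation: because $(-)^\perp$ is an involution and hence a bijection on $Y$, quantifying over all $y \in Y$ is the same as quantifying over all $y' = y^\perp \in Y$. I expect this to be the only genuinely load-bearing step, everything else being substitution. The one point worth checking at the end is that the biconditional in (i) really certifies $\hat h$ as a right adjoint in the poset sense, but this is automatic: the Galois-connection biconditional forces both $f$ and $\hat h$ to be monotone, so no separate monotonicity argument is required.
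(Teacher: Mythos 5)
Your proof is correct and takes essentially the same route as the paper's: both directions amount to the same chain of equivalences, obtained by substituting $y \mapsto y^\perp$ and invoking the involution $y^{\perp\perp}=y$ to pass between $f(x)\perp y$ and $f(x)\leq y^\perp$, and between $x\perp h(y)$ and $x\leq \hat h(y^\perp)$. The paper writes this as a single chain of biconditionals rather than an explicit change of variable, but the content, including the observation that no separate monotonicity check is needed, is identical.
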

\begin{proof} Ad (i)$\Rightarrow$ (ii): We have, for any $x \in X$ and $y \in Y$, 
\begin{align*}
   f(x) \perp y &\text{ if and only if }  f(x)\leq y^{\perp} \text{ if and only if }   
   x \leq \hat{h}(y^{\perp})\\  
   &\text{ if and only if } x \leq h(y)^{\perp} \text{ if and only if }
    x \perp h(y).
\end{align*}
Ad (ii)$\Rightarrow$ (i): We have, for any $x \in X$ and $y \in Y$, 
\begin{align*}
    f(x)\leq y &\text{ if and only if }  f(x) \perp y^{\perp} \text{ if and only if } 
    x \perp h(y^{\perp})\\  &\text{ if and only if }  
    x \leq h(y^{\perp})^{\perp} \text{ if and only if } x \leq \hat{h}(y).
\end{align*}
\end{proof}

Since order adjoints are unique and ${}^{\perp}$ is involutive, if $f$ does have an adjoint, it has exactly one adjoint and we shall denote it by $f\adj$. 

Moreover, as usual, we define the {\it kernel} $\kernel f$ and the {\it range} $\image f$
of \mbox{$f\in \Cat{Lin}(X,Y)$,} respectively, by
\begin{align*}
\kernel f \;=\; & \{ x \in X \colon f(x) = 0 \}, \\
\image f \;=\; & \{ f(x) \colon x \in X\}.
\end{align*}

\begin{corollary} \label{cor:lattice-adjoint}
Let $f \colon X \to Y$ be a map between orthomodular
lattices and assume that $f$ possesses the adjoint $h \colon Y \to X$. Then 
$f$ preserves arbitrary existing joins in $X$. In particular, 
$f$ preserves finite joins, is order-preserving and $f(0)=0$. 
\end{corollary}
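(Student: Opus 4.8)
The plan is to reduce the statement to the classical order-theoretic principle that a left adjoint between posets preserves all existing suprema. By Lemma~\ref{lem:lattice-adjoint}, since $f$ possesses the adjoint $h$, the map $f\colon(X,\leq)\to(Y,\leq)$ has $\hat{h}={}^{\perp}\circ h\circ{}^{\perp}$ as a right order-adjoint; that is, I may work throughout with the Galois connection
\[ f(x)\leq y \quad\text{if and only if}\quad x\leq\hat{h}(y), \]
valid for all $x\in X$ and $y\in Y$. Everything will follow by manipulating this single equivalence in both directions.

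First I would record that $f$ is order-preserving. If $x\leq x'$, then from $f(x')\leq f(x')$ the adjunction yields $x'\leq\hat{h}(f(x'))$, hence $x\leq\hat{h}(f(x'))$, and reading the equivalence the other way gives $f(x)\leq f(x')$. This monotonicity is the only preliminary I need.

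Next comes the central step. Let $S\subseteq X$ be such that $\bigvee S$ exists in $X$. By monotonicity, $f(\bigvee S)$ is an upper bound of $\{f(s):s\in S\}$, so it remains to check it is the \emph{least} such bound. I would take an arbitrary upper bound $u\in Y$, so $f(s)\leq u$ for all $s\in S$; the adjunction turns this into $s\leq\hat{h}(u)$ for every $s$, whence $\hat{h}(u)$ is an upper bound of $S$ and therefore $\bigvee S\leq\hat{h}(u)$; applying the adjunction once more gives $f(\bigvee S)\leq u$. This exhibits $f(\bigvee S)$ as the least upper bound of $\{f(s):s\in S\}$, which is precisely the assertion that $\bigvee_{s\in S}f(s)$ exists in $Y$ and equals $f(\bigvee S)$.

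The displayed ``in particular'' consequences are then immediate specializations. Taking $S=\emptyset$, whose supremum is the bottom $0$, gives $f(0)=\bigvee\emptyset=0$; taking $S$ finite (all finite joins exist in an orthomodular lattice) gives $f(x\disjun y)=f(x)\disjun f(y)$; and order-preservation was already established above. I do not expect a genuine obstacle: the proof is entirely formal once the Lemma supplies the adjunction. The only two points that require attention are the bidirectional use of the Galois connection in the central step and the existence of the join on the target side—and the argument produces the latter for free, since $f(\bigvee S)$ is directly exhibited as the least upper bound of the image.
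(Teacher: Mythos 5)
Your proof is correct and follows exactly the route the paper intends: the paper states this corollary without proof, taking it as the classical fact that a map with a right order-adjoint (supplied here by Lemma~\ref{lem:lattice-adjoint}) preserves all existing suprema, which is precisely the Galois-connection argument you carry out. Your filled-in details—monotonicity, the least-upper-bound verification, and the specializations to $S=\emptyset$ and finite $S$—are all sound.
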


\begin{theorem}\label{OMLisdagger}
    \Cat{OMLatLin} is  a dagger category.
\end{theorem}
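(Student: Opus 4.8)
The plan is to exhibit the dagger functor explicitly and then verify the three conditions in Definition~\ref{DagcatDef}: that it is the identity on objects, that it is (contravariantly) functorial, and that it is involutive. On objects I would set $X^\star = X$, so the functor is the identity on objects by fiat. On a morphism $f \colon X \to Y$ I would define $f^\star = f\adj$, the adjoint, which exists because $f$ is linear and is \emph{unique} by the remark following Lemma~\ref{lem:lattice-adjoint}; uniqueness is precisely what makes $f\adj$ a well-defined operation on morphisms.

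The first point to check is that $f^\star$ is genuinely a morphism of $\Cat{OMLatLin}$, i.e. that it is itself linear. This is immediate from the symmetry of adjointness noted in the excerpt: since $f\adj$ is an adjoint of $f$, the map $f$ is an adjoint of $f\adj$, so $f\adj$ possesses an adjoint and therefore belongs to $\Cat{Lin}(Y,X)$. Thus ${}^\star$ sends a morphism $X \to Y$ to a morphism $Y \to X$, exactly as a functor on $\Cat{OMLatLin}\op$ should.

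Next I would verify functoriality. For identities, $\idmap[X]^\star = \idmap[X]$ holds because the identity map is self-adjoint, as already observed. For composition, given $\smash{X \stackrel{f}{\rightarrow} Y \stackrel{g}{\rightarrow} Z}$, I would appeal to the computation already carried out in the excerpt, which shows that $f\adj \circ g\adj$ is an adjoint of $g \circ f$; uniqueness of adjoints then yields $(g \circ f)^\star = f^\star \circ g^\star$, the contravariant composition law. Involutivity $(f^\star)^\star = f$ follows from the same two principles: $f$ is an adjoint of $f\adj$, and by uniqueness it is \emph{the} adjoint, so $(f\adj)\adj = f$.

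Assembling these facts shows that ${}^\star \colon \Cat{OMLatLin}\op \to \Cat{OMLatLin}$ is an involutive, identity-on-objects functor, hence a dagger. I do not expect a genuine obstacle here, since every ingredient—uniqueness of adjoints, symmetry of the adjoint relation, self-adjointness of identities, and the adjoint-of-a-composite rule—has already been established in the preceding lemma and the discussion following it; the proof is essentially their assembly. The only place demanding care is the bookkeeping of variance, namely confirming that the dagger reverses the order of composition rather than preserving it, which is exactly what the composition rule for adjoints delivers.
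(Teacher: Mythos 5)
Your proof is correct and takes essentially the same route as the paper: the dagger is defined as the (unique) adjoint, and functoriality, involutivity, and identity-on-objects all follow from the uniqueness of adjoints together with the computations already done before the theorem (self-adjointness of $\idmap$ and the fact that $f\adj \circ g\adj$ is an adjoint of $g \circ f$). The paper compresses all of this into one sentence; you have merely unfolded the same argument.
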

\begin{proof}
    Since every morphism in \Cat{OMLatLin} has a unique adjoint we obtain that 
    ${}\adj\colon \Cat{OMLatLin}\op \to \Cat{OMLatLin}$ is functorial, involutive and the identity on objects.
\end{proof}

Let us recall the following elementary results and definition.

\begin{lemma} {\rm\cite[Lemma 3.4]{Jac}}
\label{DownsetLem}
Let $X$ be an orthomodular lattice, with element $a\in X$. 
The (principal) downset $\downset a = \setin{u}{X}{u \leq a}$ is
  again an orthomodular lattice, with order, meets and
  joins as in $X$, but with its own orthocomplement $\perp_a$ given
  by $u^{\perp_a} = a \conjun u^{\perp}$, where $\perp$ is the
  orthocomplement from $X$.
\end{lemma}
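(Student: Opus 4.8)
The plan is to verify directly that $(\downset a, \conjun, a)$ equipped with the operation $u^{\perp_a} = a \conjun u^\perp$ satisfies the three ortholattice axioms of Definition~\ref{OMLatDef}, and then to confirm orthomodularity. First I would note that $\downset a$ is closed under the meet of $X$: if $u, v \leq a$ then $u \conjun v \leq a$. Hence $\downset a$ is a meet-semilattice whose order and meets are the restrictions of those in $X$, and whose top element is $a$, playing the role of the $1$ required by the definition. It then remains only to show that $u \mapsto a \conjun u^\perp$ behaves as an orthocomplement relative to this top.

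Two of the three axioms are routine. Order-reversal is immediate from monotonicity: if $u \leq v \leq a$ then $v^\perp \leq u^\perp$, so $a \conjun v^\perp \leq a \conjun u^\perp$, i.e.\ $v^{\perp_a} \leq u^{\perp_a}$. For the orthogonality axiom I would compute $u \conjun u^{\perp_a} = u \conjun a \conjun u^\perp = u \conjun u^\perp = 0$, using $u \leq a$; this also identifies the bottom of $\downset a$ as $a^{\perp_a} = a \conjun a^\perp = 0$, the bottom of $X$, as expected.

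The heart of the argument, and the single place where orthomodularity is genuinely used, is the involution law $u^{\perp_a \perp_a} = u$. Here I would first apply De Morgan inside $X$ to rewrite $(a \conjun u^\perp)^\perp = a^\perp \disjun u$, giving $u^{\perp_a \perp_a} = a \conjun (a^\perp \disjun u)$. Since $u \leq a$, the orthomodular law in the form $x \leq y \Rightarrow x = y \conjun (y^\perp \disjun x)$, applied with $x = u$ and $y = a$, yields exactly $a \conjun (a^\perp \disjun u) = u$. This is the step that would break down in a merely orthocomplemented lattice, so it is the main obstacle in the sense that it isolates precisely where the hypothesis on $X$ enters.

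Finally, I would confirm the remaining claims of the statement. A short De Morgan computation shows that the join of $\downset a$ defined through $\perp_a$, namely $(u^{\perp_a} \conjun v^{\perp_a})^{\perp_a} = (a \conjun (u \disjun v)^\perp)^{\perp_a}$, collapses by involution to $u \disjun v$; thus joins in $\downset a$ coincide with those of $X$. For orthomodularity I would invoke the third equivalent condition of Definition~\ref{OMLatDef}, which is convenient since it involves only meets and the orthocomplement. Observing that $u^{\perp_a} \conjun v = a \conjun u^\perp \conjun v = u^\perp \conjun v$ whenever $v \leq a$, the condition ``$u \leq v$ and $u^{\perp_a} \conjun v = 0$ implies $u = v$'' in $\downset a$ becomes literally the corresponding condition ``$u \leq v$ and $u^\perp \conjun v = 0$ implies $u = v$'' in $X$, which holds by orthomodularity of $X$. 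Hence $\downset a$ is an orthomodular lattice with the stated structure.
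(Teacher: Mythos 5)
The paper gives no proof of this lemma at all — it is quoted verbatim from Jacobs \cite[Lemma 3.4]{Jac} — so there is nothing internal to compare against; your blind verification stands on its own, and it is correct and complete. You check all the required pieces: closure of $\downset a$ under meets with top $a$, order-reversal, the orthogonality axiom $u \conjun u^{\perp_a} = a^{\perp_a} = 0$, the involution $u^{\perp_a\perp_a} = a \conjun (a^\perp \disjun u) = u$, the coincidence of joins, and orthomodularity of $\downset a$ via the third equivalent condition, where the reduction $u^{\perp_a} \conjun v = u^\perp \conjun v$ for $v \leq a$ transfers the condition literally to $X$. You also correctly isolate the one genuinely non-routine step: the involution law is precisely the orthomodular law $x \leq y \Rightarrow x = y \conjun (y^\perp \disjun x)$ applied to $u \leq a$, which is the standard argument and the reason the lemma fails in a merely orthocomplemented lattice.
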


\begin{definition}\label{def:Sasaki projection}
		Let $X$ be an orthomodular lattice. Then the map $\pi_a:X\to X$, $y\mapsto a\wedge(a^\perp\vee y)$ is called the \emph{Sasaki projection} to $a\in X$.
	\end{definition}

 We need the following facts about Sasaki projections 
 (see \cite{LiVe}):
	
	\begin{lemma} {\rm\cite[Lemma 2.7]{LiVe}}\label{lem:Sasaki projection facts}
		Let $X$ be an orthomodular lattice, and let $a\in X$. Then for each $y,z\in X$ we have
		\begin{itemize}
			\item[(a)] $y\leq a$ if and only if $\pi_a(y)=y$;
			\item[(b)] $\pi_a(\pi_a(y^\perp)^\perp))\leq y$;
			\item[(c)] $\pi_a(y)=0$ if and only if $y\leq a^\perp$;
			\item[(d)] $\pi_a(y)\perp z$ if and only if $y\perp \pi_a(z)$.
		\end{itemize}
	\end{lemma}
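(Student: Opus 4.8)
The plan is to treat the four items in the order (a), (c), (b), (d), since (a)--(c) reduce to direct applications of the orthomodular law while (d), the self-adjointness of $\pi_a$, is the one requiring a genuine idea.

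For (a), I would read the defining orthomodular identity ``$x\le y$ implies $x = y\wedge(y^\perp\vee x)$'' (the second bullet of Definition~\ref{OMLatDef}) with $y$ in the role of $x$ and $a$ in the role of $y$: when $y\le a$ it gives immediately $y = a\wedge(a^\perp\vee y)=\pi_a(y)$. The converse is trivial, since $\pi_a(y)=a\wedge(\cdots)\le a$, so $\pi_a(y)=y$ forces $y\le a$.

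For (c), if $y\le a^\perp$ then $a^\perp\vee y=a^\perp$ and $\pi_a(y)=a\wedge a^\perp=0$; conversely, from $\pi_a(y)=0$ I would apply orthomodularity to $a^\perp\le a^\perp\vee y$, writing $a^\perp\vee y = a^\perp\vee\big(a\wedge(a^\perp\vee y)\big)=a^\perp\vee\pi_a(y)=a^\perp$, whence $y\le a^\perp$. For (b), a De Morgan computation gives $\pi_a(y^\perp)^\perp=a^\perp\vee(a\wedge y)$, so that $\pi_a\big(\pi_a(y^\perp)^\perp\big)=a\wedge\big(a^\perp\vee(a\wedge y)\big)=\pi_a(a\wedge y)=a\wedge y$, the last equality by (a) applied to $a\wedge y\le a$; and $a\wedge y\le y$ finishes it.

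The main obstacle is (d). Here I would pass to the principal downset $\downset a$, which by Lemma~\ref{DownsetLem} is itself an orthomodular lattice with the inherited order and the relative orthocomplement $u^{\perp_a}=a\wedge u^\perp$. The key observation is that $\pi_a(z)$ lives in $\downset a$ and its relative orthocomplement is exactly $a\wedge z^\perp$: indeed $\pi_a(z)^{\perp_a}=a\wedge\big(a^\perp\vee(a\wedge z^\perp)\big)=\pi_a(a\wedge z^\perp)=a\wedge z^\perp$ by (a). Since $\pi_a(y)\le a$, the condition $\pi_a(y)\perp z$, i.e.\ $\pi_a(y)\le z^\perp$, is equivalent to $\pi_a(y)\le a\wedge z^\perp=\pi_a(z)^{\perp_a}$, that is, to orthogonality of $\pi_a(y)$ and $\pi_a(z)$ \emph{inside} $\downset a$. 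Orthogonality in the ortholattice $\downset a$ is a symmetric relation, so this is equivalent to $\pi_a(z)\le a\wedge y^\perp=\pi_a(y)^{\perp_a}$, hence (using $\pi_a(z)\le a$) to $\pi_a(z)\le y^\perp$, i.e.\ $y\perp\pi_a(z)$. The crux is thus the identity $\pi_a(z)^{\perp_a}=a\wedge z^\perp$ together with the self-duality of orthogonality in $\downset a$; everything else is bookkeeping.
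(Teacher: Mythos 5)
Your proposal is correct, but there is no in-paper proof to compare it against: the paper states Lemma~\ref{lem:Sasaki projection facts} as a quotation of \cite[Lemma 2.7]{LiVe} and omits the proof entirely, so your argument stands as a self-contained replacement for the external reference. Judged on its own terms it is sound: (a) and (c) are exactly the two orthomodular identities of Definition~\ref{OMLatDef}, applied to $y\leq a$ and to $a^\perp\leq a^\perp\vee y$ respectively, and your De Morgan computation for (b) actually proves the stronger identity $\pi_a(\pi_a(y^\perp)^\perp)=a\wedge y$, of which the stated inequality is an immediate consequence. For (d), the reduction to the relative ortholattice $\downset a$ is legitimate and the key identity $\pi_a(z)^{\perp_a}=a\wedge\pi_a(z)^\perp=\pi_a(a\wedge z^\perp)=a\wedge z^\perp$ does follow from De Morgan together with (a), exactly as you compute; the symmetry of orthogonality inside $\downset a$ needs only that $u\mapsto u^{\perp_a}$ is an antitone involution on $\downset a$, which Lemma~\ref{DownsetLem} (stated earlier in the paper, hence no circularity) supplies — and in fact the involution law $(u^{\perp_a})^{\perp_a}=\pi_a(u)=u$ for $u\leq a$ is again just your item (a), so the argument is essentially self-contained even without invoking that lemma. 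This structural route through the downset is a genuine alternative to the purely equational verifications of (d) usual in the Sasaki-projection and Foulis-semigroup literature, and it harmonizes with the paper itself, which exploits the same construction $\downset a$ in Lemma~\ref{DownsetLemma} to build dagger monomorphisms; what it buys is that the only orthomodularity input is item (a), with everything else being ortholattice bookkeeping.
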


 \begin{corollary}\label{Sasself}
     Let $X$ be an orthomodular lattice with $a\in X$. Then 
     $\pi_a$ is self-adjoint and idempotent.
 \end{corollary}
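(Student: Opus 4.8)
The plan is to derive both properties directly from the facts about Sasaki projections collected in Lemma~\ref{lem:Sasaki projection facts}, since $\pi_a$ is defined purely in lattice-theoretic terms and all the needed work has effectively been done there. No new lattice identities should be required.

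For self-adjointness, I would simply unfold the definition of \emph{self-adjoint} given earlier: the map $\pi_a$ is self-adjoint precisely when it is an adjoint of itself, that is, when $\pi_a(x)\perp y$ if and only if $x\perp\pi_a(y)$ for all $x,y\in X$. But this biconditional is exactly the content of Lemma~\ref{lem:Sasaki projection facts}(d), so self-adjointness is immediate. As a byproduct one records that $\pi_a\in\Cat{Lin}(X)$ with $\pi_a\adj=\pi_a$, and hence by Corollary~\ref{cor:lattice-adjoint} that $\pi_a$ is order-preserving and join-preserving.

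For idempotence, the key observation is that $\pi_a(y)=a\conjun(a^{\perp}\disjun y)\leq a$ for every $y\in X$, so the range of $\pi_a$ is contained in $\downset a$. Applying Lemma~\ref{lem:Sasaki projection facts}(a) with $\pi_a(y)$ in place of $y$ — which is legitimate precisely because $\pi_a(y)\leq a$ — gives $\pi_a(\pi_a(y))=\pi_a(y)$, i.e.\ $\pi_a\after\pi_a=\pi_a$.

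There is essentially no analytic obstacle here; the only care needed is bookkeeping. For self-adjointness one must correctly translate the abstract notion of an adjoint in \Cat{OMLatLin} into the concrete orthogonality statement of part (d), and for idempotence one must notice that it is the inclusion $\pi_a(y)\leq a$ (obvious from the defining meet) that licenses the appeal to part (a). Both identifications are routine once the definitions are lined up.
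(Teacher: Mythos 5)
Your proof is correct and is exactly the argument the paper intends: the corollary is stated without proof immediately after Lemma~\ref{lem:Sasaki projection facts}, with self-adjointness being a direct restatement of part~(d) and idempotence following from $\pi_a(y)\leq a$ together with part~(a). Nothing is missing; your bookkeeping (including the observation that $\pi_a(y)\leq a$ licenses the use of part~(a)) is precisely the intended reading.
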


Similarly as in \cite[Lemma 3.4]{Jac} we have the following lemma. 

\begin{lemma} \label{DownsetLemma}
Let $X$ be an orthomodular lattice, with element $a\in X$. 
There is a dagger monomorphism $\downset a \rightarrowtail X$ in
  \Cat{OMLatLin}, for which we also write $a$, with
$$\begin{array}{r@{}c@{}l@{}c@{}r@{}c@{}l}
a(u) & = & u \ \text{\ for all}\ u\leq a
& \quad\mbox{and}\quad &\ \ 
a^{*}(x) & = & \pi_a(x) \ \text{\ for all}\ x\in X.
\end{array}$$
\end{lemma}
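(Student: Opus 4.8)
The plan is to verify directly that the inclusion $a\colon \downset a \to X$, $u \mapsto u$, is a morphism of $\Cat{OMLatLin}$ whose adjoint is $\pi_a$, and then to check the dagger-monomorphism identity ${a^{*}\circ a = {\idmap}_{\downset a}}$. The first thing I would record is that $\pi_a$ actually takes values in $\downset a$: for every $x \in X$ we have $\pi_a(x) = a \conjun (a^\perp \disjun x) \leq a$, so $a^{*} = \pi_a$ is a well-defined map $X \to \downset a$.

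The core of the argument is the equivalence demanded by the definition of a morphism in $\Cat{OMLatLin}$: for all $u \in \downset a$ and $x \in X$,
$$a(u) \perp x \iff u \perp_a \pi_a(x),$$
where the left orthogonality is computed in $X$ and the right one in $\downset a$ with its own orthocomplement $\perp_a$ from Lemma~\ref{DownsetLem}. My first step here would be to collapse $\perp_a$ to $\perp$: since $u^{\perp_a} = a \conjun u^\perp$, for any $v \leq a$ the relation $u \perp_a v$, i.e. $u \leq a \conjun v^\perp$, is equivalent to $u \leq v^\perp$, that is, to $u \perp v$ in $X$ (using $u \leq a$). Taking $v = \pi_a(x) \leq a$, this reduces the goal to the single-lattice statement $u \perp x \iff u \perp \pi_a(x)$ for $u \leq a$.

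This residual equivalence is where the Sasaki-projection facts do the work, and it is the step I expect to carry the real content. Because $u \leq a$, Lemma~\ref{lem:Sasaki projection facts}(a) gives $\pi_a(u) = u$, while Lemma~\ref{lem:Sasaki projection facts}(d) (self-adjointness of $\pi_a$) gives $\pi_a(x) \perp u \iff x \perp \pi_a(u)$. Combining them, $u \perp \pi_a(x) \iff x \perp \pi_a(u) \iff x \perp u \iff u \perp x$, which is exactly what is needed. Hence $a$ is a linear map possessing $\pi_a$ as an adjoint, and by the uniqueness of adjoints noted after Lemma~\ref{lem:lattice-adjoint} we may write $a^{*} = \pi_a$.

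Finally, the dagger-monomorphism condition would follow immediately from fact (a) again: for $u \in \downset a$ we have $(a^{*}\circ a)(u) = \pi_a(u) = u$, so ${a^{*}\circ a = {\idmap}_{\downset a}}$. The hard part is not any single computation but the careful bookkeeping between the two orthocomplements $\perp$ and $\perp_a$; once the observation that $\perp_a$ restricts to $\perp$ on pairs of elements lying below $a$ is in place, everything else drops out of parts (a) and (d) of Lemma~\ref{lem:Sasaki projection facts}.
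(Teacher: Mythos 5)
Your proposal is correct and follows essentially the same route as the paper: both arguments establish the equivalence $a(u)\perp x \iff u \perp_a \pi_a(x)$ by combining Lemma~\ref{lem:Sasaki projection facts}(a) (so that $\pi_a(u)=u$ for $u\leq a$) with Lemma~\ref{lem:Sasaki projection facts}(d), together with the observation that $\perp_a$ agrees with $\perp$ on elements below $a$, and both then deduce the dagger-monomorphism identity $a^{*}\circ a = \idmap_{\downset a}$ from fact (a). The only difference is cosmetic ordering: you dispose of the $\perp$ versus $\perp_a$ bookkeeping first and then invoke (a) and (d), whereas the paper applies (a) and (d) first and converts to $\perp_a$ at the end.
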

\begin{proof} 
We compute with the help of Lemma \ref{lem:Sasaki projection facts} (a) and (d): 
\begin{align*}
    a(u) \perp x &\text{ if and only if } \pi_a(u) = u \perp x  \\
    &\text{ if and only if }  
    u \perp \pi_a(x)\\
    &\text{ if and only if } u \leq \pi_a(x)^{\perp}\\ %
    &\text{ if and only if } u \leq \pi_a(x)^{\perp} \wedge a= \pi_a(x)^{\perp_a}\\
    &\text{ if and only if } u \leq \pi_a(x)^{\perp} \wedge a= \pi_a(x)^{\perp_a}.
\end{align*}
The first equivalence follows from Lemma \ref{lem:Sasaki projection facts}(a), 
the second one from Lemma \ref{lem:Sasaki projection facts}(d). The remaining equivalences are evident. 
We conclude that $a\colon \downset a \rightarrow X$ is a linear map and its adjoint is 
$a^{*}\colon X\to \downset a$.

From Lemma \ref{lem:Sasaki projection facts}(a) we also obtain that the 
 map $a\colon \downset a\rightarrow X$ is a dagger monomorphism since 
 $a^{*}(a(u)) = \pi_a(u)=u.$
\end{proof}

\begin{lemma} \label{KernelLemma}
Let $f \colon X \to Y$ be a morphism of  orthomodular lattices. Then 
$\kernel f=\downset f^{*}(1)^{\perp}$ is an orthomodular lattice.
\end{lemma}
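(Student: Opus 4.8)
The plan is to proceed in two steps: first establish the set-theoretic identity $\kernel f = \downset f^{*}(1)^{\perp}$, and then invoke Lemma~\ref{DownsetLem} to conclude that this downset already carries the structure of an orthomodular lattice.

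For the set equality I would chain together a sequence of equivalences resting on the defining adjointness property of $f^{*}$. The starting observation is that the equational condition $f(x)=0$ can be recast as an orthogonality condition: since $a \perp 1$ holds precisely when $a \leq 1^{\perp}=0$, we have $f(x)=0$ if and only if $f(x) \perp 1$, using the identity $1^{\perp}=0$ recorded in Definition~\ref{OMLatDef}. Applying the adjoint relation $f(x)\perp y \iff x \perp f^{*}(y)$ with $y=1$ then gives $f(x)\perp 1$ if and only if $x \perp f^{*}(1)$, and unfolding the definition of $\perp$ yields $x \perp f^{*}(1)$ if and only if $x \leq f^{*}(1)^{\perp}$, that is, $x \in \downset f^{*}(1)^{\perp}$. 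Concatenating these equivalences produces
\[
x \in \kernel f \iff f(x)=0 \iff f(x)\perp 1 \iff x \perp f^{*}(1) \iff x \in \downset f^{*}(1)^{\perp},
\]
which is the desired identity.

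With the set equality in hand, the second step is immediate: Lemma~\ref{DownsetLem} guarantees that for any $a \in X$ the principal downset $\downset a$ is an orthomodular lattice, with order, meets, and joins inherited from $X$ and with its own orthocomplement $\perp_a$. Specialising to $a = f^{*}(1)^{\perp}$ then shows that $\kernel f$ is an orthomodular lattice, completing the argument.

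I do not expect a serious obstacle here. The only point requiring care is the translation between the equational condition $f(x)=0$ and the orthogonality condition $f(x)\perp 1$, which is where the identity $1^{\perp}=0$ enters; once this reformulation is made, the existence of the adjoint $f^{*}$ (available because $f$ is a morphism of \Cat{OMLatLin}) together with Lemma~\ref{DownsetLem} does all the remaining work.
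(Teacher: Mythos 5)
Your proof is correct and follows essentially the same route as the paper's: the identical chain of equivalences $f(x)=0 \iff f(x)\perp 1 \iff x\perp f^{*}(1) \iff x\leq f^{*}(1)^{\perp}$, with Lemma~\ref{DownsetLem} supplying the orthomodular structure on the principal downset. The only difference is that you state the appeal to Lemma~\ref{DownsetLem} explicitly, which the paper leaves implicit.
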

\begin{proof} Let $x \in X$. We compute: 
\begin{align*}
x\in \kernel f&\text{ if and only if } f(x)=0  \text{ if and only if } 
f(x)\perp 1\\ 
&\text{ if and only if } x\perp f^{*}(1) \text{ if and only if } 
x\leq f^{*}(1)^{\perp}.
\end{align*}
\end{proof}

 \begin{corollary}\label{SSasself}
     Let $X$ be an orthomodular lattice, and let 
     $f \colon X \to X$ be a self-adjoint morphism of  orthomodular lattices. Then 
$\kernel f=\downset f(1)^{\perp}$ and 
\mbox{$f(f(y^\perp)^\perp)\leq y$} for all $y\in X$.
 \end{corollary}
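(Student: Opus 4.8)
The plan is to obtain both assertions directly from results already in hand, without any fresh computation of substance. For the first equality, I would simply specialise Lemma~\ref{KernelLemma} to the case $Y = X$: it gives $\kernel f = \downset f\adj(1)\c$, and since $f$ is self-adjoint we have $f\adj = f$, whence $\kernel f = \downset f(1)\c$ immediately.

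For the inequality $f(f(y\c)\c) \leq y$, I would run through the defining adjointness property of a linear map, which for a self-adjoint $f$ reads $f(u) \perp v$ if and only if $u \perp f(v)$ for all $u,v \in X$. Writing $b = f(y\c)$ to keep the notation light, the goal $f(b\c) \leq y$ is equivalent, via the standard translation $x \leq z \Leftrightarrow x \perp z\c$, to $f(b\c) \perp y\c$. Applying self-adjointness turns this into $b\c \perp f(y\c) = b$, that is $b\c \leq b\c$, which is a tautology. Reading the biconditional chain backwards then delivers the claim.

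Conceptually this is nothing but the self-adjoint specialisation of Lemma~\ref{lem:Sasaki projection facts}(b): the Sasaki projection $\pi_a$ is itself a self-adjoint morphism by Corollary~\ref{Sasself}, so part (b) there is exactly the present inequality for $f = \pi_a$. Consequently I do not expect a genuine obstacle. The only point demanding care is the bookkeeping with the orthocomplement: one must use repeatedly that $\perp$ is involutive and that $x \perp y \Leftrightarrow x \leq y\c$, and in particular verify that every link in the chain is a biconditional, so that the terminal tautology $b\c \leq b\c$ indeed propagates back to the desired relation $f(b\c) \leq y$.
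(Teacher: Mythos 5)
Your proposal is correct and takes essentially the same route as the paper: the kernel equality is read off from Lemma~\ref{KernelLemma} with $f^* = f$, and the inequality $f(f(y^\perp)^\perp)\leq y$ is reduced, via the translation $x \leq z \Leftrightarrow x \perp z^\perp$ followed by one application of self-adjointness, to the tautology $f(y^\perp)^\perp \perp f(y^\perp)$. Your abbreviation $b = f(y^\perp)$ and the closing remark about Lemma~\ref{lem:Sasaki projection facts}(b) are cosmetic differences only; every step matches the paper's chain of biconditionals.
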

 \begin{proof} It is enough to check that 
 \mbox{$f(f(y^\perp)^\perp)\leq y$}. We compute:  \begin{align*} 
 f(f(y^\perp)^\perp)\leq y&%
  \text{ if and only if }  
  f(f(y^\perp)^\perp)\perp y^\perp \\
   &\text{ if and only if }   
   f(y^\perp)^\perp\perp f(y^\perp).
 \end{align*}
 \end{proof}

We show that \Cat{OMLatLin} has a {\it zero object} $\nul$; this means that there is, for any 
orthomodular lattice $X$, a unique morphism $\nul \to X$ and hence 
also a unique morphism $X \to \nul$. The zero object $\nul$ will be 
the one-element orthomodular lattice $\{0\}$.

Let us show that $\nul$ is indeed an 
initial object in $\Cat{OMLatLin}$. Let $X$ be an arbitrary orthomodular
lattice. By Corollary \ref{cor:lattice-adjoint}, the only linear map $f \colon \nul \to X$ satisfies $f(0)=0$. 
Since we may identify $\nul$ with $\downset 0$ we have 
by Lemma \ref{DownsetLemma} that $f$ is is a dagger monomorphism and 
it has an adjoint $f^* \colon X\to \nul$ defined by $f^*(x)=\pi_0(x)=0$.

\section{Foulis m-semilattices} \label{Foulis}

An {\it m-semilattice\/} is a join-semilattice $(S,\sqcup,0)$  with an associative
binary multiplication satisfying
$$
x\cdot\bigsqcup\limits_{i\in I}
x_i=\bigsqcup\limits_{i\in I}x\cdot
x_i\ \ \hbox{and}\ \ (\bigsqcup\limits_{i\in I}x_i)\cdot
x=\bigsqcup\limits_{i\in I}x_i\cdot x
$$
for all $x,\,x_i\in S,\,i\in I$ ($I$ is a finite set). 
We denote the respective order on $S$ by $\sqsubseteq$. 
The distributivity of $\cdot$ over finite joins implies that 
$\cdot$ is monotone with respect
to $\sqsubseteq$ and that $0$ is a zero in the semigroup 
$(S, \cdot)$.

An m-semilattice $S$ is called {\it unital\/} if
there is an element $e\in S$ such that
$$
e\cdot a = a = a\cdot e
$$
\noindent
for all $a\in S$.

By an {\it involutive  m-semilattice} will be meant
an m-semilattice $S$ together with a semigroup
 involution $^{*}$, i.e., $ s^{**}= s$ 
 and $(s \cdot t)^{*}= t^{*} \cdot s^{*}$ 
for all $s, t \in S$, satisfying
$$
(\bigsqcup\limits_{i\in I} x_{i})^{*}=%
\bigsqcup\limits_{i\in I} x_{i}^{*}
$$
\noindent for all $x_{i}\in S, \,i\in I$ ($I$ is a finite set).  
In particular, it follows that $0^{*}= 0$.
In the event that $S$
is also unital, then necessarily $e$ is selfadjoint, i.e.,
$$
e=e^{*}.
$$
Namely, $e^{*}\cdot x=(e^{*}\cdot x)^{**}=%
(x^{*}\cdot e^{**})^{*}=(x^{*}\cdot e)^{*}=(x^{*})^{*}=x$ and similarly $x\cdot e^{*}=x$ for all $x\in S$. 

We also define $s\leq t$ if 
and only if $s=t\cdot s$, and $s\perp t$ if 
and only if $0=s^{*}\cdot t$ for all $s, t\in S$. 
Evidently, $\leq$ is transitive and $\perp$ is symmetric. 
Namely, $s\leq t$ and $t\leq u$ implies $s=t\cdot s$ and 
$t=u\cdot t$. We conclude $s=t\cdot s=u\cdot t\cdot s=u\cdot s$. 
Observe also that, if $s\perp t$,
then $0 = 0^{*}= (s^{*}\cdot t)^{*}= t^{*}\cdot s^{**}%
= t^{*}\cdot s$, i.e., $t\perp s$. 

\begin{proposition}\label{inv}
     Let $X$ and $Y$ be orthomodular lattices. Then 
     \begin{enumerate}[label={\rm({\roman*})}]
     \item $\Cat{Lin}(X,Y)$ is a join-semilattice 
     (with respect to the point\-wise order $(Y, \sqsubseteq)^{X}$),
         \item $\Cat{Lin}(X)$ is an involutive unital 
         m-semilattice (with respect to the pointwise order, composition and taking adjoints).
     \end{enumerate}
 \end{proposition}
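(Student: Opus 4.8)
The plan is to erect both structures on the category axioms already verified for \Cat{OMLatLin}, on the adjoint characterization of Lemma~\ref{lem:lattice-adjoint}, and on the finite-join preservation of Corollary~\ref{cor:lattice-adjoint}. For~(i), the pointwise order on $\Cat{Lin}(X,Y)$ is the restriction of the product order $(Y,\leq)^{X}$, so I would first note that the constant map $\nul$ with $\nul(x)=0$ is linear and self-adjoint, giving a bottom element. The substance is closure under binary (hence finite) pointwise joins. Given $f,g$ with adjoints $f\adj,g\adj$, set $(f\sqcup g)(x)=f(x)\disjun g(x)$ and propose $f\adj\sqcup g\adj$ as its adjoint. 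The check is a chain of equivalences: $(f\sqcup g)(x)\perp y$ says $f(x)\disjun g(x)\leq y\c$, i.e. $f(x)\perp y$ and $g(x)\perp y$, which through the adjoints becomes $x\perp f\adj(y)$ and $x\perp g\adj(y)$; the De Morgan identity $f\adj(y)\c\conjun g\adj(y)\c=(f\adj(y)\disjun g\adj(y))\c$ then collapses this to $x\perp(f\adj\sqcup g\adj)(y)$. Since the join formed in $Y^{X}$ lands back in $\Cat{Lin}(X,Y)$, it is the join there, so $\Cat{Lin}(X,Y)$ is a join-semilattice with zero $\nul$.

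For~(ii) I would equip $\Cat{Lin}(X)$ with multiplication $\after$ (composition), unit the self-adjoint identity $\idmap$, and involution $f\mapsto f\adj$. That $\after$ is associative and preserves linearity, with $(g\after f)\adj=f\adj\after g\adj$, is precisely the computation made just before the Example; the same place records that $\idmap$ is a two-sided unit and self-adjoint. The underlying join-semilattice, with bottom $\nul$, is part~(i) specialized to $Y=X$. What remains are the two distributivity laws and the involution axioms. Right distributivity $(\bigsqcup_{i}g_{i})\after f=\bigsqcup_{i}(g_{i}\after f)$ is immediate from the pointwise meaning of joins. Left distributivity is the only genuine computation: evaluating $f\after(\bigsqcup_{i}g_{i})$ at $x$ demands $f\bigl(\bigvee_{i}g_{i}(x)\bigr)=\bigvee_{i}f(g_{i}(x))$, i.e. preservation of the finite join $\bigvee_{i}g_{i}(x)$, which Corollary~\ref{cor:lattice-adjoint} provides. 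For the involution, $(f\adj)\adj=f$ follows from symmetry and uniqueness of adjoints, $(g\after f)\adj=f\adj\after g\adj$ is the formula above, $(\bigsqcup_{i}x_{i})\adj=\bigsqcup_{i}x_{i}\adj$ is exactly the join-adjoint identity from~(i), and $\nul\adj=\nul$ is its self-adjointness.

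I expect the main obstacle to be the closure-under-joins step in~(i): it is the sole point that genuinely uses the orthocomplement (through De Morgan), and the resulting formula $(f\sqcup g)\adj=f\adj\sqcup g\adj$ is what later discharges both the join-compatibility of the involution and, via Corollary~\ref{cor:lattice-adjoint}, the left-distributivity in~(ii). Everything else is formal bookkeeping inherited from the category structure of \Cat{OMLatLin}, so once the pointwise join of linear maps is shown to be linear, the remaining axioms follow by routine verification.
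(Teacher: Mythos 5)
Your proposal is correct and follows essentially the same route as the paper's proof: the zero map as bottom element, closure of $\Cat{Lin}(X,Y)$ under pointwise joins via the equivalence chain yielding $(f\sqcup g)\adj=f\adj\sqcup g\adj$, the monoid and involution structure inherited from the dagger-category verification, and left distributivity discharged by the join-preservation of Corollary~\ref{cor:lattice-adjoint}. The only (harmless) difference is presentational: you argue through $\perp$ and De Morgan directly where the paper routes the same chain through the order-adjoint form of Lemma~\ref{lem:lattice-adjoint}, and you are slightly more explicit than the paper in noting that the right distributive law is purely pointwise while only the left one genuinely needs join preservation.
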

 \begin{proof} Ad (i): Clearly, the zero map $0_{X,Y}$ has as adjoint 
 the zero map $0_{Y,X}$. Hence $\Cat{Lin}(X,Y)$ has a smallest element. 
 Now, let $f, g\in \Cat{Lin}(X,Y)\subseteq Y^{X}$. Assume that $x\in X$ and $y\in Y$. 
 We compute with the help of Lemma \ref{lem:lattice-adjoint}: 
 \begin{align*}
   (f\vee g)(x) \perp y &\text{ if and only if }  f(x)\leq y^{\perp}\ 
   \text{and}\ g(x)\leq y^{\perp}\\ 
   &\text{ if and only if }   
   f^{*}(y) \leq x^{\perp} \text{ and }g^{*}(y) \leq x^{\perp}\\  
   &\text{ if and only if } %
   (f^{*}\vee g^{*})(y) \leq x^{\perp}.
\end{align*}
Therefore $f\vee g\in \Cat{Lin}(X,Y)$ and  $(f\vee g)^{*}=f^{*}\vee g^{*}$. 

Ad (ii): Evidently, $\Cat{Lin}(X)$ has by (i) arbitrary finite joins and is a monoid with respect to composition of linear maps. Moreover, 
${}^{*}$ is a semigroup involution on $\Cat{Lin}(X)$ by 
Theorem~\ref{OMLisdagger}.

Now, let 
$f,\,g_i\in \Cat{Lin}(X),\,i\in I$ ($I$ is a finite set). 
Assume that $x\in X$. We compute  with the help of Corollary  \ref{cor:lattice-adjoint}:
\begin{align*}
(f\circ \bigvee\limits_{i\in I} g_i)(x)&=%
f(\bigvee\limits_{i\in I}g_i(x))=\bigvee\limits_{i\in I}f(g_i(x))\\
&=%
\left(\bigvee\limits_{i\in I}f \circ g_i\right)(x)
\end{align*}
The remaining distributive law follows by the same arguments as before.
\end{proof}

An involutive monoid structure, augmented by the operation $\sai{-}$, was initially termed a \auv{Baer *-semigroup} by Foulis in his seminal works of the 1960s. This structure is now widely recognized as a \auv{Foulis semigroup}.

\begin{definition}
    \label{FoulisDef}
A \emph{Foulis semigroup} consists of a monoid $(S,\cdot, e)$ together with two endomaps $(-)^{*} \colon S\rightarrow S$
and $\sai{-} \colon S\rightarrow S$ satisfying for all $s,t\in S$:
\begin{enumerate}[label={\rm({\arabic*})}]
\item $e^{*} = e$ and 
$(s\cdot t)^{*} = t^{*}\cdot s^{*}$
  and $s^{**} = s$, making $S$ an involutive monoid;

\item $\sai{s}$ is a self-adjoint idempotent, i.e., 
  $\sai{s} \cdot \sai{s} = \sai{s} = \sai{s}^{*}$;

\item $0 \;\smash{\stackrel{\textrm{def}}{=}}\; \sai{e}$ is a zero
element: $0 \cdot s = 0 = s\cdot 0$;

\item $s\cdot t = 0$ iff $\ex{y}{t = \sai{s}\cdot y}$.
\end{enumerate}
For an arbitrary $t\in S$ put $t^{\perp}
  \,\smash{\stackrel{\textrm{def}}{=}}\, \sai{t^{*}} \in \sai{S}$.
  Hence from~(2) we get the equations $t^{\perp} \cdot t^{\perp} =
  t^{\perp} = (t^{\perp})^{*}$. From (2) and (3) we also infer 
  $0^{*} = \sai{e}^{*} =\sai{e} = 0$.
\end{definition}

\begin{definition}
				By a {\em  Foulis  m-semilattice} will be meant
				an involutive unital m-semilattice $S$ that with its multiplication $\cdot$ and its involution $^{*}$ forms a Foulis semigroup. 
                We will call elements of $\sai{S}=\set{\sai{t}}{t\in S} $ {\em Sasaki projections}.
					\end{definition}

\begin{remark}\label{remFouldef}\rm 
    We immediately see that an m-semilattice $S$ is a Foulis m-semilattice if and only if 
   it is a unital involutive m-semilattice $S$ 
together with an endomap ${-}^{\perp} \colon S\rightarrow S$ satisfying  for all $s,t\in S$:
\begin{enumerate}[label=({\arabic*})]
\item ${s}^{\perp}$ is a self-adjoint idempotent, {i.e.},~satisfies
  ${s}^{\perp} \cdot {s}^{\perp} = {s}^{\perp} = \big({s}^{\perp}\big)^{*}$;

\item $0\, {=} \, {e}^{\perp}$, i.e., ${e}^{\perp}$ is a zero with respect to $(S, \cdot)$;

\item $s\perp t$ if and only if  $ s^{*}\cdot t = 0$ if and only if $\ex{y}{t = {s}^{\perp}\cdot y}$.

\end{enumerate}
Since ${s}^{\perp} \cdot {s}^{\perp} = {s}^{\perp}$ we conclude with the help of (3) 
that $s\perp {s}^{\perp}$, i.e., $s^{*}\cdot {s}^{\perp}=0$ 
and $s^{\perp}\cdot {s}=0$. 
\end{remark}

A classical result in orthomodular lattice theory, first established over four decades ago in [13] and further elaborated in [4, Chapter II, Section 19], states that the endomorphisms $\Cat{Lin}(X)$ of any orthomodular lattice $X$ form a Foulis semigroup. The original proof utilized the theory of Galois connections.

It is well-established that for any orthomodular lattice $X$, the set of endomorphisms 
$\Cat{Lin}(X)$ forms a Foulis semigroup. This result has been known for over 60 years, as detailed in \cite{Foul} and \cite[Chapter~5, \S\S18]{OML}, where the construction is described in terms of Galois connections.

\begin{theorem}{\rm{\cite{Foul}, \cite{OML}}}\label{thm:linfoul}
    Let $X$ be an orthomodular lattice and let us define the endomap $\sai{-} \colon \Cat{Lin}(X)\rightarrow \Cat{Lin}(X)$ by 
 $\sai{s} =\pi_{s^*(1)^{\perp}}$ for all linear maps $s\in \Cat{Lin}(X)$. Then $(\Cat{Lin}(X),\circ, \idmap)$ is a Foulis semigroup  with respect to taking adjoints ${}^{*}$ and $\sai{-}$.
 \end{theorem}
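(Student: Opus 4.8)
The plan is to check the four defining conditions of a Foulis semigroup (Definition \ref{FoulisDef}) for the tuple $(\Cat{Lin}(X), \circ, \idmap, {}^{*}, \sai{-})$ with $\sai{s} = \pi_{s^{*}(1)^{\perp}}$. Conditions (1)--(3) follow almost immediately from results already established, so I would dispose of them first and reserve the real effort for condition (4). For (1), I would invoke Theorem \ref{OMLisdagger}: since $\Cat{OMLatLin}$ is a dagger category, ${}^{*}$ is an involution with $s^{**} = s$ and $(s \circ t)^{*} = t^{*} \circ s^{*}$, and the identity is self-adjoint, so $(\Cat{Lin}(X), \circ, \idmap, {}^{*})$ is an involutive monoid. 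For (2), I would note that $\sai{s} = \pi_{s^{*}(1)^{\perp}}$ is a Sasaki projection, so Corollary \ref{Sasself} gives at once that $\sai{s}$ is self-adjoint and idempotent.

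For condition (3), I would compute $\sai{\idmap}$: since the identity is self-adjoint we have $\idmap^{*}(1) = 1$, hence $\sai{\idmap} = \pi_{1^{\perp}} = \pi_0$, and $\pi_0(y) = 0 \wedge (1 \vee y) = 0$ for every $y \in X$, so $\sai{\idmap} = 0_{X,X}$ is the zero map. This is a two-sided zero of $(\Cat{Lin}(X), \circ)$, since $0_{X,X} \circ s = 0_{X,X}$ trivially and $s \circ 0_{X,X} = 0_{X,X}$ uses only $s(0) = 0$ (Corollary \ref{cor:lattice-adjoint}).

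The substance of the argument is condition (4), namely that $s \circ t = 0$ if and only if $t = \sai{s} \circ y$ for some $y \in \Cat{Lin}(X)$. Writing $a = s^{*}(1)^{\perp}$, so that $\sai{s} = \pi_a$, the key step is to translate $s \circ t = 0$ into a containment of images: $s \circ t = 0$ iff $t(x) \in \kernel s$ for all $x \in X$, i.e.\ $\image t \subseteq \kernel s$. By Lemma \ref{KernelLemma}, $\kernel s = \downset s^{*}(1)^{\perp} = \downset a$, so the condition reads exactly $t(x) \leq a$ for all $x$. The backward direction is then immediate: if $t = \pi_a \circ y$ then $t(x) = a \wedge (a^{\perp} \vee y(x)) \leq a$, so $\image t \subseteq \downset a = \kernel s$ and $s \circ t = 0$. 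For the forward direction, if $t(x) \leq a$ for all $x$, then Lemma \ref{lem:Sasaki projection facts}(a) yields $\pi_a(t(x)) = t(x)$, whence $\pi_a \circ t = t$, so $y = t$ itself witnesses $t = \sai{s} \circ y$. I expect the only delicate point to be the reduction of $s \circ t = 0$ to the kernel containment via Lemma \ref{KernelLemma}; once the kernel is identified as $\downset a$, recognising that $t$ serves as its own witness $y$ makes the remainder routine.
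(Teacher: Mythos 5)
Your proof is correct. The paper itself does not prove Theorem~\ref{thm:linfoul}: it states it as a classical result of Foulis~\cite{Foul} and Kalmbach~\cite{OML}, remarking only that the original arguments are phrased in terms of Galois connections. Your proposal therefore supplies a proof the paper leaves to the literature, and it does so entirely with the paper's own infrastructure: Theorem~\ref{OMLisdagger} for the involutive-monoid axiom~(1), Corollary~\ref{Sasself} for the self-adjoint idempotence in axiom~(2), the computation $\sai{\idmap}=\pi_{0}=0_{X,X}$ together with $s(0)=0$ from Corollary~\ref{cor:lattice-adjoint} for axiom~(3), and---the real content---Lemma~\ref{KernelLemma} combined with Lemma~\ref{lem:Sasaki projection facts}(a) for axiom~(4). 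Your reduction of $s\circ t=0$ to the containment $\image t\subseteq \kernel s=\downset s^{*}(1)^{\perp}$, followed by the observation that $t$ is its own witness (since $\image t\subseteq\downset a$ forces $\pi_a\circ t=t$), is exactly the right mechanism; it is in effect a pointwise, more elementary reformulation of the Galois-connection argument, requiring nothing beyond the facts about Sasaki projections and kernels that the paper has already assembled. One point worth stating explicitly in a final write-up: the definition requires $\sai{-}$ and $(-)^{*}$ to be endomaps of $\Cat{Lin}(X)$, i.e.\ $\sai{s}$ must itself be a \emph{linear} map; this is again covered by Corollary~\ref{Sasself}, since self-adjointness exhibits $\pi_{s^{*}(1)^{\perp}}$ as its own adjoint, so your citation already closes this gap, but the reader should see it invoked for that purpose and not only for idempotence.
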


 \begin{corollary}
     Let $X$ be an orthomodular lattice. Then 
     $\Cat{Lin}(X)$ is a Foulis m-semilattice.
 \end{corollary}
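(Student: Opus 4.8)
The plan is to obtain the corollary by directly assembling the two preceding results, since a Foulis m-semilattice was defined to be exactly an involutive unital m-semilattice whose multiplication and involution make it a Foulis semigroup. First I would invoke Proposition~\ref{inv}(ii), which shows that $\Cat{Lin}(X)$, equipped with the pointwise join, composition as multiplication, the identity map $\idmap$ as unit, and adjunction ${}^{*}$ as involution, is an involutive unital m-semilattice. Then I would invoke Theorem~\ref{thm:linfoul}, which states that the same structure $(\Cat{Lin}(X),\circ,\idmap)$, with the identical involution ${}^{*}$ and the endomap $\sai{-}$ defined by $\sai{s}=\pi_{s^{*}(1)^{\perp}}$, is a Foulis semigroup.

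The only point requiring attention — and it is immediate rather than a genuine obstacle — is that the two structures are compatible: the multiplication in both results is composition of linear maps, and the involution in both is the passage to the unique adjoint supplied by Lemma~\ref{lem:lattice-adjoint}. Hence $\Cat{Lin}(X)$ is an involutive unital m-semilattice that, under its own multiplication and involution, forms a Foulis semigroup, and this is precisely the definition of a Foulis m-semilattice. All of the substantive computation has already been carried out in Proposition~\ref{inv} and Theorem~\ref{thm:linfoul}, so I expect no difficulty beyond verifying this coherence of the multiplication and the involution across the two statements.
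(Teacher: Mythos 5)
Your proposal is correct and follows essentially the same route as the paper, which likewise just combines Proposition~\ref{inv}~(ii) and Theorem~\ref{thm:linfoul} (the paper also cites Remark~\ref{remFouldef}, an equivalent reformulation of the definition, but your direct appeal to the definition of a Foulis m-semilattice works just as well). Your explicit check that the multiplication (composition) and involution (adjoint) agree across the two cited results is exactly the coherence the paper's one-line proof leaves implicit.
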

 \begin{proof}
     Use Proposition~\ref{inv}~(ii), Remark~\ref{remFouldef} and Theorem~\ref{thm:linfoul}.
 \end{proof}

 \begin{theorem}
\label{FoulisOMKerLem}
Let $S$ be a Foulis  m-semilattice. Then, 
for all $t, r\in S$ and $k\in \sai{S}$, 
\medskip

\noindent$\begin{array}{@{}r@{\,}c@{\,}c@{\,}c@{\,}c@{\,}c@{\,}c}
r\perp t&\Longleftrightarrow &r^{*}\cdot t=0
& \Longleftrightarrow &
t=[r^{*}]\cdot t=r^{\perp}\cdot t&\Longleftrightarrow &t\leq r^{\perp}%
\end{array}
{(*)}$
$$\begin{array}{@{}r@{\,}c@{\,}c@{\,}c@{\,}c@{\,}c@{\,}l}
t\leq  r&\Longleftrightarrow &t=r\cdot t
& \Longrightarrow &r^{\perp}= t^{\perp}\cdot r^{\perp} &
\Longleftrightarrow &
r^{\perp}\leq t^{\perp} \\
&\text{and}&
k^{\perp\perp}=k,
\end{array}\eqno{\phantom{*}(**)}$$
$$\begin{array}{rcl}
t\leq  r^{\perp}
& \Longleftrightarrow &
r\leq t^{\perp}. 
\end{array}\eqno{(***)}$$

\noindent{}and the subset $\sai{S}$ is an orthomodular lattice with the following structure.
$$\begin{array}{lrcl}
\mbox{Order} & k_{1}\leq k_{2} & \Leftrightarrow & k_{1} = k_{2}\cdot k_{1} \\
\mbox{Top} & e & = &  
   \sai{0} \\
\mbox{Orthocomplement\quad\quad} & k^{\perp} & = & \sai{k} \\
\mbox{Finite binary meet} & k_{1} \conjun k_{2} & = &  
   \big(k_{1} \cdot \sai{\sai{k_{2}}\cdot k_{1}}\big)^{\perp\perp}\\
\mbox{Finite join} & \bigvee X & = &  \sai{\sai{\bigsqcup X}}.
\end{array}$$
\end{theorem}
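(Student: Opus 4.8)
The plan is to read $(*)$, $(**)$, $(***)$ as a small calculus relating the three derived notions on $S$ — the order $\leq$ (where $s\leq t$ iff $s=t\cdot s$), the orthogonality $\perp$ (where $s\perp t$ iff $s^{*}\cdot t=0$), and the map $t\mapsto t^{\perp}=\sai{t^{*}}$ — and then to build the orthomodular lattice on $\sai{S}$ on top of that calculus, using only the reformulated axioms of Remark~\ref{remFouldef}, the recorded identities $s^{\perp}\cdot s=0$ and $s^{*}\cdot s^{\perp}=0$, the idempotency and self-adjointness of each $s^{\perp}$, and the distributivity of $\cdot$ over finite joins. For $(*)$ I would chase the definitions: $r\perp t$ means $r^{*}\cdot t=0$; by Remark~\ref{remFouldef}(3) this is $t=\sai{r^{*}}\cdot y=r^{\perp}\cdot y$ for some $y$, and idempotency of $r^{\perp}$ upgrades this to $t=r^{\perp}\cdot t$ (with $y=t$ for the converse), which is by definition $t\leq r^{\perp}$. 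Then $(***)$ is immediate from $(*)$ and symmetry of $\perp$: $t\leq r^{\perp}$ iff $r\perp t$ iff $t\perp r$ iff $r\leq t^{\perp}$. For the antitone implication in $(**)$, from $t=r\cdot t$ I take adjoints to get $t^{*}=t^{*}\cdot r^{*}$, whence $t^{*}\cdot r^{\perp}=t^{*}\cdot(r^{*}\cdot r^{\perp})=0$; this says $t\perp r^{\perp}$, so $(*)$ gives $r^{\perp}\leq t^{\perp}$, i.e. $r^{\perp}=t^{\perp}\cdot r^{\perp}$. The remaining equivalences in $(*)$ and $(**)$ are just the definitions of $\leq$.

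The one non-formal identity in $(**)$ is $k^{\perp\perp}=k$ for $k\in\sai{S}$, which I would obtain by a Galois-closure argument. First I would note that $\leq$ is a partial order on self-adjoint elements: if $s=t\cdot s$ and $t=s\cdot t$ with $s=s^{*}$, $t=t^{*}$, then the adjoint of the first equation reads $s=s\cdot t$, so $s=s\cdot t=t$; in particular antisymmetry holds on $\sai{S}$. Next, for every $w\in S$ the identity $w^{\perp}\cdot w=0$ says $w^{\perp}\perp w$, so $(*)$ gives the ``easy'' inequality $w\leq w^{\perp\perp}$, valid for all $w$. Applying this to $w^{\perp}$ gives $w^{\perp}\leq w^{\perp\perp\perp}$, while applying the antitone law to $w\leq w^{\perp\perp}$ gives $w^{\perp\perp\perp}\leq w^{\perp}$; as both sides are self-adjoint, antisymmetry forces $w^{\perp\perp\perp}=w^{\perp}$. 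Since every $k\in\sai{S}$ has the form $w^{\perp}$, this is exactly $k^{\perp\perp}=k$.

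With the calculus established I would assemble the lattice. The top is $\sai{0}$: because $0\cdot v=0$ for all $v$, the right annihilator of $0$ is all of $S$, so $\sai{0}$ is the $\leq$-greatest element of $S$, and it coincides with $e$ by antisymmetry. On $\sai{S}$ the orthocomplement $k\mapsto\sai{k}=k^{\perp}$ is an order-reversing involution by the antitone law and $k^{\perp\perp}=k$. The structural input is the existence of finite meets, which I would extract from the join of $S$ through the annihilator description: for $k\in\sai{S}$, $(*)$ gives $k\cdot v=0$ iff $v\leq k^{\perp}$, so the right annihilator of $k$ is $\downset k^{\perp}$. For a finite family $Y\subseteq\sai{S}$, distributivity yields $(\bigsqcup_{y}y^{\perp})\cdot v=\bigsqcup_{y}(y^{\perp}\cdot v)$, which vanishes iff every $y^{\perp}\cdot v$ vanishes, i.e. iff $v\leq y$ for all $y\in Y$; hence the annihilator generator $\sai{\bigsqcup_{y}y^{\perp}}=(\bigsqcup_{y}y^{\perp})^{\perp}$ is the $\leq$-greatest common lower bound of $Y$. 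Thus finite meets exist in $\sai{S}$, and dualising through the orthocomplement gives finite joins $\bigvee X=(\bigwedge_{x}x^{\perp})^{\perp}=(\bigsqcup X)^{\perp\perp}=\sai{\sai{\bigsqcup X}}$, which matches the stated formula, $\bigsqcup X$ being self-adjoint because $^{*}$ preserves joins and fixes $\sai{S}$.

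What remains, and what I expect to be the main obstacle, is upgrading this ortholattice to an orthomodular one and confirming the explicit Sasaki meet formula $k_{1}\conjun k_{2}=(k_{1}\cdot\sai{\sai{k_{2}}\cdot k_{1}})^{\perp\perp}$. These are exactly the points where orthomodularity, as opposed to purely formal manipulation of annihilators, has to be drawn out of the semigroup, mirroring how the Sasaki projection behaves in the concrete model $\Cat{Lin}(X)$. I would check the meet formula by showing its right-hand side is a lower bound of $k_{1}$ and $k_{2}$ that agrees with the annihilator meet computed above, moving freely between $\perp$, $\leq$ and orthogonality via $(*)$–$(***)$, and I would read off orthomodularity from the law $k_{1}\leq k_{2}\Rightarrow k_{2}=k_{1}\disjun(k_{1}^{\perp}\conjun k_{2})$ by the same calculus. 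I expect the bookkeeping in these two computations, rather than any conceptual gap, to be the hard part.
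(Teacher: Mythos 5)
Your calculus in the first two paragraphs is correct: the derivations of $(*)$, $(**)$, $(***)$, the antisymmetry of $\leq$ on self-adjoint elements, the closure identity $k^{\perp\perp}=k$, the identification of the top as $e=\sai{0}$, and the construction of finite meets as annihilator generators $\big(\bigsqcup_{y\in Y}y^{\perp}\big)^{\perp}$ --- hence the join formula $\bigvee X=(\bigsqcup X)^{\perp\perp}=\sai{\sai{\bigsqcup X}}$ by De Morgan duality --- all go through. For context, the paper's own proof consists of a citation of Jacobs' Lemma 4.6 for everything except the join formula, plus a direct upper-bound/least-upper-bound verification of that formula using distributivity of $\cdot$ over $\sqcup$; your annihilator-plus-duality route is a legitimate (and arguably cleaner) alternative to the one part the paper actually proves.

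The genuine gap is exactly where you predicted mere ``bookkeeping'': orthomodularity of $\sai{S}$ and the stated binary meet formula are the substance of Foulis's classical theorem, and they do not follow by formal manipulation of $(*)$--$(***)$. Concretely, take the third form of orthomodularity: $k_1\leq k_2$ and $k_1^{\perp}\wedge k_2=0$ imply $k_1=k_2$. Your annihilator meet turns the hypothesis into: $k_1\cdot t=0$ and $k_2^{\perp}\cdot t=0$ imply $t=0$; and the goal $k_1=k_2$ is, via $(*)$, the identity $k_1^{\perp}\cdot k_2=0$. The natural witness to feed into the hypothesis is $t=k_1^{\perp}\cdot k_2$ (or its double perp); then $k_1\cdot t=0$ holds because $k_1\cdot k_1^{\perp}=0$, but the second condition $k_2^{\perp}\cdot t=0$ is, again via $(*)$, the statement $k_1^{\perp}\cdot k_2\leq k_2$, i.e.\ $k_2\cdot k_1^{\perp}\cdot k_2=k_1^{\perp}\cdot k_2$ --- and under the hypotheses this is equivalent to the goal itself. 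Every attempt to ``read off'' orthomodularity from the calculus circles back in this way; the missing ingredient is Foulis's commutation-type argument showing that $k_1\leq k_2$ forces facts about products such as $k_1^{\perp}\cdot k_2$ that lie outside $\sai{S}$, and this nontrivial classical content is precisely what the paper deliberately does not reprove but imports by citing Jacobs. The meet formula $\big(k_1\cdot\sai{\sai{k_2}\cdot k_1}\big)^{\perp\perp}$ is in better shape --- lower-boundedness follows from $s\cdot\sai{s}=0$ applied to the compound element $s=\sai{k_2}\cdot k_1$, and the greatest-lower-bound property from axiom (4) applied to that same element --- but as written your proposal verifies neither claim, so the theorem as stated is not yet proved.
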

\begin{proof} The majority of this content is widely recognized in  \cite[Lemma 4.6.]{Jac} 
and its proof for Foulis semigroups. It remains only to show that 
$\sai{\sai{\bigsqcup X}}$ is the join of the finite subset 
$X\subseteq [S]$.
We put 
$t =   \bigsqcup X$. Since ${}^*$ is a $\sqcup$-endomorphism 
and the elements of $X$ are self-adjoint,, $t$ is self-adjoint. 
Therefore, $t^{\perp}= \sai{t}$, and since 
$\sai{t}^*= \sai{t}$, also $t^{\perp\perp}= \sai{t}^{\perp}= \sai{\sai{t}}$. 
From Remark \ref{remFouldef} we obtain that $t\cdot  \sai{t}=t\cdot t^{\perp}=0$.
\begin{enumerate}[label={\rm ({\roman*})}]
    \item    Let $r\in X$. 
    Since the multiplication $\cdot$ is order-preserving we have $r\cdot \sai{\bigsqcup X}%
    \sqsubseteq\bigsqcup X\cdot \sai{\bigsqcup X}=0$. Hence   $r\cdot \sai{\bigsqcup X}=0$. 
    Since $0^*= 0$ and both  $r$ and $\sai{t}$ are self-adjoint we conclude 
    that also $\sai{\bigsqcup X}\cdot r=0$. Using (*) we obtain 
    that $\sai{\sai{\bigsqcup X}}\cdot r=r$, i.e., $r\leq \sai{\sai{\bigsqcup X}}$. Hence $\sai{\sai{\bigsqcup X}}$ is an upper 
    bound of $X$.
     \item Let $u$ be an upper bound of $X$ in $\sai{S}$, i.e., $s=u\cdot s$ for all 
     $s\in X$.
    We compute: 
    \begin{align*}
     u\cdot t&= u\cdot {\bigsqcup X}= \bigsqcup \{u\cdot s\mid {s\in X}\}=%
    \bigsqcup \{s\mid {s\in X}\}\\
    &=t.
    \end{align*}
    From the conclusion that $t\leq u$, we derive from (**) that
    $\sai{\sai{\bigsqcup X}}=t^{\perp\perp}\leq u^{\perp\perp}=u$.
    Hence $\sai{\sai{\bigsqcup X}}=(\bigsqcup X)^{\perp\perp}$ is the smallest upper bound of $X$  in $\sai{S}$.
    \end{enumerate} 
\end{proof}

\section{m-semilattice modules}\label{semmod}

\begin{definition}\label{keymodule} Given a unital m-semilattice $(S, \sqcup, 0)$, a {\em  left $S$-module} 
	is a join-semilattice $(A, \vee, 0)$  and a 
	module action $\bullet\colon S\times A\longrightarrow A$ satisfying:
	\begin{itemize}
		\item[](A1) $s \bullet (\bigvee B)=\bigvee_{x\in B}(s \bullet x)$ 
		for every finite $B\subseteq A$ and  $s \in S$.
		\item[](A2) $(\bigsqcup T)\bullet a=\bigvee_{t\in T}(t\bullet a)$ 
		for every finite $T\subseteq S$ and  $a \in A$.
		\item[](A3) $u\bullet(v\bullet a)=(u\cdot v)\bullet a$ for every $u,v \in S$ and every $a\in A$.
		\item[](A4) $e \bullet a=a$ for all $a\in A$ (unitality).\\
	\end{itemize}	
\end{definition}
Right $S$-modules are defined similarly. Evidently, 
every  join-semilattice $A$   is a right  and left ${\mathbf 2}$-module. 
Here, ${\mathbf 2}$ is a 2-element chain, its multiplication 
is its meet and  involution is the identity map on it.

The following statement asserts that an orthomodular lattice exhibits a dual nature. It can be acted upon from the left by its own linear transformations and simultaneously from the right by a specific two-element structure. This two actions provides two distinct, yet compatible, means of transforming or modifying the elements within the lattice.

\begin{proposition}\label{rqinv}
	Let $X$ be a orthomodular lattice. Then 
	$X$ is a left $\Cat{Lin}(X)$-module and also a right $\Cat{2}$-module.
\end{proposition}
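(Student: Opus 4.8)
The plan is to take the module action to be plain evaluation and to read off all four axioms of Definition~\ref{keymodule} from facts already established about $\Cat{Lin}(X)$. Concretely, I would define $\bullet\colon \Cat{Lin}(X)\times X\to X$ by $s\bullet x = s(x)$. Since $X$ is an orthomodular lattice it is in particular a join-semilattice $(X,\vee,0)$, so $(X,\vee,0)$ is a candidate left $\Cat{Lin}(X)$-module and it remains only to verify (A1)--(A4).

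For (A1), the identity $s\bullet(\bigvee B)=\bigvee_{x\in B}(s\bullet x)$ is exactly the statement that the linear map $s$ preserves finite joins (including the empty join, i.e.\ $s(0)=0$); this is precisely Corollary~\ref{cor:lattice-adjoint}. For (A2), I would use that by Proposition~\ref{inv}(i) the join in the m-semilattice $\Cat{Lin}(X)$ is computed pointwise, so $(\bigsqcup T)(a)=\bigvee_{t\in T}t(a)$ holds by the very definition of the pointwise join; hence $(\bigsqcup T)\bullet a=\bigvee_{t\in T}(t\bullet a)$. Axiom (A3) is immediate because the multiplication on $\Cat{Lin}(X)$ is composition: $u\bullet(v\bullet a)=u(v(a))=(u\circ v)(a)=(u\cdot v)\bullet a$. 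Finally (A4) holds since the unit $e$ of $\Cat{Lin}(X)$ is $\idmap$, whence $e\bullet a=\idmap(a)=a$.

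For the right $\Cat{2}$-module structure, I would invoke the observation recorded right after Definition~\ref{keymodule}, that every join-semilattice is a right (and left) $\Cat{2}$-module; explicitly the action is $a\bullet 1=a$ and $a\bullet 0=0$, where $1$ and $0$ are the top and bottom of $\Cat{2}$ (its unit and zero). The right-module analogues of (A1)--(A4) then reduce to the distributivity of meet over the two-element chain together with the identities $a\bullet 1=a$ and $a\bullet 0=0$, all of which are dispatched by a trivial case split on the $\Cat{2}$-arguments.

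I do not expect a genuine obstacle: the whole content is bookkeeping, and the only non-formal inputs are the join-preservation of linear maps (Corollary~\ref{cor:lattice-adjoint}) and the pointwise description of joins in $\Cat{Lin}(X)$ (Proposition~\ref{inv}(i)). The one point deserving care is (A2): one must be sure that the semilattice join $\bigsqcup T$ taken in $\Cat{Lin}(X)$ agrees with the pointwise join evaluated at $a$, which is guaranteed because Proposition~\ref{inv}(i) shows the pointwise join of linear maps is again linear and hence is the supremum in the pointwise order on $\Cat{Lin}(X)$.
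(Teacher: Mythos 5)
Your proposal is correct and matches the paper's proof exactly: the paper also takes the action $f\bullet x=f(x)$, cites Proposition~\ref{inv}~(ii) for the m-semilattice structure on $\Cat{Lin}(X)$, and dismisses (A1)--(A4) as ``transparent,'' while the right $\Cat{2}$-module claim rests on the same observation following Definition~\ref{keymodule}. Your write-up simply makes explicit the bookkeeping (Corollary~\ref{cor:lattice-adjoint} for (A1), pointwise joins from Proposition~\ref{inv}~(i) for (A2)) that the paper leaves to the reader.
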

\begin{proof} By Proposition~\ref{inv}~(ii), $\Cat{Lin}(X)$ is a unital m-semi\-lattice.We define the action 
$\bullet\colon \Cat{Lin}(X)\times X\longrightarrow X$ by 
$f\bullet x=f(x)$ for all $f\in \Cat{Lin}(X)$ and all $x\in X$. 
The verification of conditions (A1)-(A4) is transparent.
\end{proof}

The following theorem reveals that the orthomodular lattice of all Sasaki projections within 
Foulis m-semilattice possesses a dual nature:
\begin{itemize}
\item Internal Action: Elements of Foulis m-semilattice itself can interact with and 'transform' the Sasaki projections.
\item External Action: The set of Sasaki projections can also be influenced by a simple two-element system.
\end{itemize}
This dual module structure provides valuable insights into the algebraic and structural properties of Foulis quantales and their associated Sasaki projections.

\begin{theorem} \label{thmFoulismod}
Let $S$ be a Foulis m-semilattice. Then $\sai{S}$ is a left $S$-module   with action $\bullet$ defined as 
$u\bullet k=(u\cdot k)^{\perp\perp}$ for all $u\in S$ and $k\in \sai{S}$  and also a right $\Cat{2}$-module.
\end{theorem}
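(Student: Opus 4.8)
The plan is to verify the four axioms (A1)--(A4) of Definition~\ref{keymodule} for the action $u\bullet k=(u\cdot k)^{\perp\perp}$; the right $\Cat{2}$-module claim is then automatic, since $\sai{S}$ is an orthomodular lattice by Theorem~\ref{FoulisOMKerLem}, hence in particular a join-semilattice, and every join-semilattice is a right $\Cat{2}$-module by the remark following Definition~\ref{keymodule}. The organizing idea for the left action is that $c(t)=t^{\perp\perp}$ is a closure operator onto $\sai{S}$: it is idempotent with $c(t)\in\sai{S}$ and $c(k)=k$ for $k\in\sai{S}$ (using $k^{\perp\perp}=k$ from Theorem~\ref{FoulisOMKerLem}); it is monotone for the multiplicative order $\leq$ (because $^{\perp}$ is antitone for $\leq$ by $(**)$); it is monotone from $\sqsubseteq$ to $\leq$ (since $t\mapsto\sai{t}$ reverses $\sqsubseteq$ into $\leq$ via the annihilator inclusion, and $^{*}$ is a $\sqcup$-isomorphism); and it is extensive for $\leq$, i.e. $t\leq t^{\perp\perp}$, because $t^{\perp}\cdot t=0$ (Remark~\ref{remFouldef}) forces $t=t^{\perp\perp}\cdot y$ for some $y$, whence $t^{\perp\perp}\cdot t=t$.

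The technical heart is a bridging lemma: for all $v,t\in S$, $v\cdot t=0$ implies $v\cdot t^{\perp\perp}=0$. Indeed, $(*)$ gives $v\cdot t=0\Leftrightarrow t\leq\sai{v}$, and since $\sai{v}\in\sai{S}$ is closed, $\leq$-monotonicity of $c$ yields $t^{\perp\perp}\leq\sai{v}^{\perp\perp}=\sai{v}$, i.e. $v\cdot t^{\perp\perp}=0$. From this I extract the two facts that drive the axioms. First, $(u\cdot t^{\perp\perp})^{\perp\perp}=(u\cdot t)^{\perp\perp}$: the inequality $\leq$ follows by applying the bridging lemma to $v=(u\cdot t)^{\perp}\cdot u$ (legitimate since $(u\cdot t)^{\perp}\cdot(u\cdot t)=0$), which gives $\sai{(u\cdot t)^{\perp}}\cdot(u\cdot t^{\perp\perp})=0$, hence $u\cdot t^{\perp\perp}\leq(u\cdot t)^{\perp\perp}$, and then $c$; the reverse inequality follows by right-multiplying the defining equation of $u\cdot t^{\perp\perp}\leq(u\cdot t^{\perp\perp})^{\perp\perp}$ by $t$ and using $t^{\perp\perp}\cdot t=t$ to get $u\cdot t\leq(u\cdot t^{\perp\perp})^{\perp\perp}$, then applying $c$. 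Second, closure preserves finite joins: $(\bigsqcup_i a_i)^{\perp\perp}=(\bigsqcup_i a_i^{\perp\perp})^{\perp\perp}$, which I prove by antisymmetry of $\leq$ on $\sai{S}$, comparing both closed elements with the orthomodular join $\bigvee_i a_i^{\perp\perp}=(\bigsqcup_i a_i^{\perp\perp})^{\perp\perp}$ of Theorem~\ref{FoulisOMKerLem}, using extensivity, $\sqsubseteq$-to-$\leq$ monotonicity of $c$, and distributivity of $\cdot$ over $\sqcup$.

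With these in hand the axioms fall out. (A4) is $e\bullet k=k^{\perp\perp}=k$. For (A3), $u\bullet(v\bullet k)=(u\cdot(v\cdot k)^{\perp\perp})^{\perp\perp}=(u\cdot v\cdot k)^{\perp\perp}=(u\cdot v)\bullet k$ by the first fact with $t=v\cdot k$. For (A1), using $\bigvee B=(\bigsqcup B)^{\perp\perp}$, the first fact, and distributivity, both $s\bullet(\bigvee B)$ and $\bigvee_{k\in B}(s\bullet k)$ reduce to $(\bigsqcup_{k\in B}(s\cdot k))^{\perp\perp}$ via the closure-join identity. (A2) is the same computation after expanding $(\bigsqcup T)\cdot k=\bigsqcup_{t\in T}(t\cdot k)$.

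The main obstacle is precisely that two different orders are in play: the double orthocomplement $c={}^{\perp\perp}$ is a closure for the multiplicative order $\leq$, whereas the module joins are built from the semilattice join $\bigsqcup$, whose order $\sqsubseteq$ is a priori unrelated to $\leq$. Reconciling them---showing $c$ is monotone from $\sqsubseteq$ to $\leq$ and that closure commutes with finite $\bigsqcup$ up to closure---is the crux, and the bridging lemma $v\cdot t=0\Rightarrow v\cdot t^{\perp\perp}=0$ is the single device that ties together the annihilator relation $\perp$, the two orders, and the closure.
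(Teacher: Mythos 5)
Your proof is correct, and its skeleton matches the paper's: both reduce the theorem to (a) unitality $e\bullet k=k$, (b) the key identity $(s\cdot t^{\perp\perp})^{\perp\perp}=(s\cdot t)^{\perp\perp}$ (the paper's (M2)), and (c) compatibility of ${}^{\perp\perp}$ with finite $\bigsqcup$ (the paper's (M3), in your wording the closure--join identity), after which the computations for (A1)--(A3) and the appeal to \auv{every join-semilattice is a right $\Cat{2}$-module} are essentially identical. Where you genuinely diverge is in how (b) and (c) are proved. The paper first proves (M1) ($s\leq t$ implies $r\cdot s\leq(r\cdot s)^{\perp\perp}\leq(r\cdot t)^{\perp\perp}$) and then gets (M2) from a chain of equivalences starting at $(s\cdot t)\perp(s\cdot t)^{\perp}$; for (M3) it proves the meet form $(\bigsqcup T)^{\perp}=\bigwedge_{s\in T}s^{\perp}$ by an upper-bound argument with $(*)$ and $(***)$ and then dualizes inside $\sai{S}$. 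You instead organize everything around the closure operator $c={}^{\perp\perp}$ and the single bridging lemma $v\cdot t=0\Rightarrow v\cdot t^{\perp\perp}=0$; this is a cleaner, more reusable device, and your order-theoretic proof of the join identity in $\sai{S}$ is a nice alternative to the paper's meet-first route. The trade-off is that your route leaves more of the elementary order facts only sketched: in particular the $\sqsubseteq$-to-$\leq$ monotonicity of $c$ deserves a line (from $s\sqsubseteq t$ one gets $s^{*}\sqsubseteq t^{*}$, hence $s^{*}\cdot t^{\perp}\sqsubseteq t^{*}\cdot t^{\perp}=0$, so $t^{\perp}\leq s^{\perp}$, and then apply $(**)$), whereas the paper verifies its claims equationally in full. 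One typo to fix: the bridging lemma applied to $v=(u\cdot t)^{\perp}\cdot u$ yields $(u\cdot t)^{\perp}\cdot(u\cdot t^{\perp\perp})=0$, not $\sai{(u\cdot t)^{\perp}}\cdot(u\cdot t^{\perp\perp})=0$; since $\sai{(u\cdot t)^{\perp}}=(u\cdot t)^{\perp\perp}$, the formula as written is false, although the conclusion you draw from it, namely $u\cdot t^{\perp\perp}\leq(u\cdot t)^{\perp\perp}$ via $(*)$, is exactly what the corrected formula gives.
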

\begin{proof}
{\bfseries Claim 1.} $e \bullet k=k$ for all $k\in \sai{S}$. 

\noindent{}Proof of Claim 1.  Let $k\in \sai{S}$. We compute, using (**), that: 
\begin{align*}
\phantom{xxxx}e \bullet k=(e\cdot k)^{\perp\perp}=k^{\perp\perp}=k.\hfill \phantom{xxxxxxxxxxxxxxxxi}\endclaim
\end{align*}

\medskip
\noindent{}{\bfseries Claim 2.} For all $r, s, t\in S$, 
$$\begin{array}{rclcl}
s\leq  t
& \Longrightarrow &
r\cdot s \leq (r\cdot s)^{\perp\perp}\leq (r\cdot t)^{\perp\perp}.
\end{array}\eqno{(\circled{{\small M1}})}$$
Proof of Claim 2.  From Remark \ref{remFouldef} we obtain $(r\cdot t)^{\perp}\cdot (r\cdot t)=0$. Hence also 
$(r\cdot t)^{\perp}\cdot (r\cdot t \cdot s)=0$. Since $t\cdot s=s$ we conclude that 
$(r\cdot t)^{\perp}\cdot (r\cdot s)=0$. From (*) 
and Remark~\ref{remFouldef}~(1) we conclude that $r\cdot s=(r\cdot t)^{\perp\perp}\cdot (r\cdot s)$, i.e., 
$r\cdot s \leq (r\cdot t)^{\perp\perp}$. In the special case $s = t$, we obtain 
$r \cdot s \leq (r · s)^{\perp\perp}$. Now applying ${}^{\perp\perp}$ to 
$r\cdot s \leq (r\cdot t)^{\perp\perp}$ 
and using (**) we have that 
$r\cdot s \leq (r\cdot s)^{\perp\perp}\leq 
(r\cdot t)^{\perp\perp\perp\perp}
= (r\cdot t)^{\perp\perp}$
since $(r\cdot t)^{\perp\perp}\in \sai{S}$. 

\hfill \endclaim

As a special case of Claim 2 with $s = t = e$, we obtain
$r \leq r^{\perp\perp}$ for all $r \in S$. From $0^{*}\cdot r=0$ we get that $r=0^{\perp} r$ for all $r\in S$, 
i.e., $0^{\perp}=e$. Moreover, 
we conclude that $r=0$ if and only if $r^{\perp\perp}=0$. 
Namely, if $r^{\perp\perp}=0$ then 
$r=r^{\perp\perp}\cdot r=0\cdot r=0$. Conversely, if 
$r=0$ then $r^{\perp\perp}=e^{\perp}=0$.

\medskip

\noindent{\bfseries Claim 3.} For all $s, t\in S$, 
$$\begin{array}{c}
(s\cdot  t)^{\perp\perp}=(s\cdot  t^{\perp\perp})^{\perp\perp}
\end{array}\eqno{(\circled{\small M2})}$$

\noindent{}Proof of Claim 3.  From Remark \ref{remFouldef} we have $(s\cdot  t)\perp (s\cdot  t)^{\perp}$. 
This means, all of the following equivalent statements hold:
\begin{align*}
&(s\cdot  t)^{*}\cdot (s\cdot  t)^{\perp}=0  %
\;\smash{\stackrel{\phantom{\text{Remark \ref{remFouldef}}}}{ \Longleftrightarrow}} t^{*}\cdot s^{*} \cdot (s\cdot  t)^{\perp}=0 \;\smash{\stackrel{{\text{(*)}}}{ \Longleftrightarrow}}\\
&t^{\perp}\cdot s^{*} \cdot (s\cdot  t)^{\perp}= s^{*} \cdot (s\cdot  t)^{\perp}
\;\smash{\stackrel{\text{(**)}}{ \Longleftrightarrow}}\; \\
&t^{\perp\perp\perp}\cdot s^{*} \cdot (s\cdot  t)^{\perp}= s^{*} \cdot (s\cdot  t)^{\perp}
\smash{\stackrel{\text{(*)}}{ \Longleftrightarrow}}
t^{\perp\perp}\cdot s^{*} \cdot (s\cdot  t)^{\perp}=0\\
 &\smash{\stackrel{\phantom{\text{\phantom{c}}}}{ \Longleftrightarrow}}\;  (s\cdot  t)^{\perp} \cdot s \cdot t^{\perp\perp}=0  \;\smash{\stackrel{{(*) }}{ \Longleftrightarrow}}\; 
 (s\cdot  t)^{\perp\perp} \cdot s \cdot t^{\perp\perp}=s \cdot t^{\perp\perp}\\
  &\smash{\stackrel{\text{(*)}}{ \Longleftrightarrow}}\;  s\cdot  t^{\perp\perp} \leq (s\cdot  t)^{\perp\perp}%
  \;\smash{\stackrel{\text{(**)}}{ \Longleftrightarrow}}\; %
  (s\cdot  t^{\perp\perp})^{\perp\perp} \leq (s\cdot  t)^{\perp\perp\perp\perp}\\ %
  &\smash{\stackrel{{\text{(**)}}}{ \Longleftrightarrow}}\; (s\cdot  t^{\perp\perp})^{\perp\perp} \leq (s\cdot  t)^{\perp\perp}.
\end{align*}

Since $t\leq t^{\perp\perp}$, (\circled{{\small M1}}) implies 
$(s\cdot  t)^{\perp\perp}\leq (s\cdot  t^{\perp\perp})^{\perp\perp}$, i.e., 
(\circled{\small M2}) holds. \hfill \endclaim

\medskip
\noindent{\bfseries Claim 4.} For all $s,t \in S$ and  $k\in \sai{S}$, 
\begin{align*}
    s\bullet (t\bullet k)=(s\cdot t)\bullet k.
\end{align*}

\noindent{}Proof of Claim 4.  We compute with the help of (M2): 
\begin{align*}
    s\bullet (t\bullet k)&=\big(s\cdot (t\bullet k)\big)^{\perp\perp}%
    =\big(s\cdot (t\cdot k)^{\perp\perp}\big)^{\perp\perp}\\%
    &=\big(s\cdot (t\cdot k)\big)^{\perp\perp}%
    =\big((s\cdot t)\cdot k\big)^{\perp\perp}=(s\cdot t)\bullet k. \hfill \phantom{x}\endclaim
\end{align*}

  \medskip

\noindent{}{\bfseries Claim 5.} For all  $T\subseteq {S}$, $T$ finite 
\begin{align*}
    \left(\bigsqcup T\right)^{\perp}= \bigwedge_{s\in T} s^{\perp}&\text{\quad and\quad } %
    \left(\bigsqcup T\right)^{\perp\perp}= \bigvee_{s\in T} s^{\perp\perp}. \tag{\circled{{\small M3}}}
\end{align*}

\noindent{}Proof of Claim 5.  Let $u\in \sai{S}$. Then 
by (***) $u\leq  \left(\bigsqcup T\right)^{\perp}$ if and only if 
$\bigsqcup_{s\in T} s \leq u^{\perp}$, which holds if and only if 
$\bigsqcup_{s\in T} s = u^{\perp}\cdot \bigsqcup_{s\in T} s= \bigsqcup_{s\in T} \left(u^{\perp}\cdot s\right)$. 

Remark \ref{remFouldef} implies that $\bigsqcup_{s\in T} u^{*}\cdot s =  \bigsqcup_{s\in T} \left(u^{*}\cdot u^{\perp}\cdot s\right)=0$. Hence 
we conclude that $u^{*}\cdot s=0$. Therefore by (*) $u \perp s$, thus $s \perp u$, i.e., 
by (*),  $u\leq s^{\perp}$ for all $s\in T$. 

Conversely, let $u\leq s^{\perp}$ for all $s\in T$. Then, by (*) $s \perp u$, thus $u \perp s$, i.e, by (*), 
$u^{*}\cdot s = 0$. Therefore, 
$ u^{*}\cdot\bigsqcup_{s\in T} s = \bigsqcup_{s\in T} (u^{*}\cdot s)=0$. We obtain that $u\perp \bigsqcup T$, i.e., 
using (*) and symmetry of $\perp$, we conclude 
$u\leq \left(\bigsqcup T\right)^{\perp}$. 

Altogether, $u\leq  \left(\bigsqcup T\right)^{\perp}$ if and only if  $u\leq  \bigwedge_{s\in T} s^{\perp}$. 

The second equation follows immediately from the first one and the fact that $\sai{S}$ is an
orthomodular lattice. \hfill \endclaim

\medskip
\noindent{}{\bfseries Claim 6.} For all $s \in S$ and $T\subseteq \sai{S}$, $T$ finite 
\begin{align*}
    s\bullet \left(\bigvee T\right)= \bigvee_{t\in T} \left(s\bullet t\right).
\end{align*}

\noindent{}Proof of Claim 6.  By utilizing (\circled{{\small M2}}) and (\circled{{\small M3}}), 
we are able to perform the following calculation:
\begin{align*}
    s&\bullet \left(\bigvee T\right)=s\bullet \left( \left(\bigsqcup T\right)^{\perp\perp}\right)=
    \left(s\cdot \left(\bigsqcup T\right)^{\perp\perp}\right)^{\perp\perp}\\%
    \smash{\stackrel{{\text{(M2)}}}{=}}& \left(s\cdot \left(\bigsqcup T\right)^{}\right)^{\perp\perp}=%
    \left(\bigsqcup_{t\in T}\left(s\cdot t\right)\right)^{\perp\perp}%
    \smash{\stackrel{{\text{(M3)}}}{=}}\bigvee_{t\in T} \left(s\cdot  t\right)^{\perp\perp}\\
     =&\bigvee_{t\in T} \left(s\bullet t\right).\hfill \phantom{xxxxxxxxxxxxxxxxxxxxxxxxxxxxxxxx}\endclaim
\end{align*}

\medskip
\noindent{}{\bfseries Claim 7.} For all $s \in \sai{S}$ and $T\subseteq {S}$, $T$ finite 
\begin{align*}
    \left(\bigsqcup T\right) \bullet s= \bigvee_{t\in T} \left(t\bullet s\right).
\end{align*}

\noindent{}Proof of Claim 7. Through the application of (\circled{{\small M2}}) and (\circled{{\small M3}}), we can carry out the following computation:
\begin{align*}
    &\left(\bigsqcup T\right) \bullet s=\big( \left(\bigsqcup T\right) \cdot s\big)^{\perp\perp}%
    =\left(\bigsqcup_{t\in T}\left(t\cdot s\right)\right)^{\perp\perp}\\
     \smash{\stackrel{{\text{(M3)}}}{=}}&\bigvee_{t\in T}\left(t\cdot s\right)^{\perp\perp}%
     = \bigvee_{t\in T} \left(t\bullet s\right).\hfill \phantom{xxxxxxxxxxxxxxxxxxxxi}\endclaim
\end{align*}

From Claims 1, 4, 6 and 7 we can conclude that $\sai{S}$ is a left $S$-module.
 \end{proof}

\begin{definition}\label{Sasaki action}
    Let $S$ be a Foulis m-semilattice and $u\in S$. 
    Then the map $\sigma_u:\sai{S}\to \sai{S}$, $y\mapsto u\bullet  y$ is called the \emph{Sasaki action} to $u\in S$.
\end{definition}

The following proposition describes key properties of the Sasaki action 
$\sigma_u$ when applied to a Sasaki projection $u$. 

\begin{proposition}\label{prop}
    Let $S$ be a Foulis m-semilattice and $u\in \sai{S}$.  
    Then the {Sasaki action} $\sigma_u$ is self-adjoint linear, idempotent 
    and $\image \sigma_u=\downset u$ in $\sai{S}$.
\end{proposition}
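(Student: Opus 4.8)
The plan is to verify the three asserted properties in turn, using throughout that $u\in\sai{S}$ is a self-adjoint idempotent, that is $u\cdot u=u=u^{*}$, and that $\sigma_u$ really does land in $\sai{S}$ since $(u\cdot y)^{\perp\perp}\in\sai{S}$ for every $y$. I will also use that the orthogonality relation of the orthomodular lattice $\sai{S}$ (Theorem~\ref{FoulisOMKerLem}) coincides with the relation $\perp$ of $S$ restricted to $\sai{S}$, which is immediate from $(*)$; consequently, establishing self-adjointness of $\sigma_u$ at the level of $S$ simultaneously exhibits $\sigma_u$ as its own adjoint and hence proves that it is linear.

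The step I expect to be the main obstacle is self-adjointness, i.e. $\sigma_u(x)\perp y$ iff $x\perp\sigma_u(y)$ for all $x,y\in\sai{S}$. I would reduce each side to a purely multiplicative condition. Since $\sigma_u(x)=(u\cdot x)^{\perp\perp}$ is self-adjoint, $(*)$ gives $\sigma_u(x)\perp y$ iff $(u\cdot x)^{\perp\perp}\cdot y=0$, which by $(*)$ and $k^{\perp\perp}=k$ (valid for $k\in\sai{S}$, $(**)$) is equivalent to $y\leq(u\cdot x)^{\perp}$. Applying $(***)$ rewrites this as $u\cdot x\leq y^{\perp}$, and a final use of $(*)$ together with $y^{*}=y$ yields $\sigma_u(x)\perp y$ iff $y\cdot u\cdot x=0$. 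Running the identical chain with $x$ and $y$ interchanged, and invoking symmetry of $\perp$, gives $x\perp\sigma_u(y)$ iff $x\cdot u\cdot y=0$. The crux is that these two conditions coincide: applying the involution sends $y\cdot u\cdot x$ to $x^{*}\cdot u^{*}\cdot y^{*}=x\cdot u\cdot y$ and fixes $0$, so one product vanishes exactly when the other does.

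Idempotency is then immediate from the module structure: since $\sai{S}$ is a left $S$-module (Theorem~\ref{thmFoulismod}), the associativity law (A3) yields $\sigma_u(\sigma_u(k))=u\bullet(u\bullet k)=(u\cdot u)\bullet k=u\bullet k=\sigma_u(k)$, using $u\cdot u=u$. For the range I would prove the two inclusions directly. For $\image\sigma_u\subseteq\downset u$, note that $u\cdot y\leq u$ because $u\cdot(u\cdot y)=u\cdot y$, so applying the order-reversal of $(**)$ twice gives $\sigma_u(y)=(u\cdot y)^{\perp\perp}\leq u^{\perp\perp}=u$. For $\downset u\subseteq\image\sigma_u$, any $k\in\sai{S}$ with $k\leq u$ satisfies $u\cdot k=k$, hence $\sigma_u(k)=k^{\perp\perp}=k$, so $k\in\image\sigma_u$. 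Combining the two inclusions gives $\image\sigma_u=\downset u$ in $\sai{S}$, completing the proof.
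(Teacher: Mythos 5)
Your proof is correct and follows essentially the same route as the paper's: self-adjointness (hence linearity) by reducing both orthogonality conditions to the multiplicative statements $y\cdot u\cdot x=0$ and $x\cdot u\cdot y=0$ via $(*)$, $(**)$, $(***)$ and linking them by the involution, idempotency from (A3) of Theorem~\ref{thmFoulismod}, and the range equality by the same two inclusions. The only cosmetic differences are that you organize self-adjointness as two parallel reductions joined by the involution rather than one chain of equivalences, and for $\image\sigma_u\subseteq\downset u$ you apply the order-reversal of $(**)$ directly to $u\cdot y\leq u$ instead of citing Claim~2 of Theorem~\ref{thmFoulismod}.
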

\begin{proof} Let $k,l \in \sai{S}$. We compute:
\begin{align*}
    \sigma_u(k)\perp l\;&\smash{\stackrel{\phantom{\text{Remark \ref{remFouldef}}}}{ \Longleftrightarrow}}\; \sigma_u(k)\leq l^{\perp}\;  %
    \;\smash{\stackrel{\text{by (**)}}{ \Longleftrightarrow}}\; \; 
    u\cdot k\leq l^{\perp} \\%
    &\;\;\smash{\stackrel{\text{def.}\; \leq}{ \Longleftrightarrow}}\; \; 
    l^{\perp}\cdot u\cdot k=u\cdot k %
    \;\smash{\stackrel{\text{by (*)}}{ \Longleftrightarrow}}\; \; 
    l\cdot u\cdot k=0\\
    &\smash{\stackrel{\text{by involution}}{ \Longleftrightarrow}}\; \; 
    k\cdot u\cdot l=0%
    \;\smash{\stackrel{\text{by (*)}}{ \Longleftrightarrow}}\; 
    k^{\perp}\cdot u\cdot l=u\cdot l\\
    &\;\;\smash{\stackrel{\text{def.}\; \leq}{ \Longleftrightarrow}}\; \; %
    u\cdot l\leq k^{\perp} %
    \;\smash{\stackrel{\text{by (**)}}{ \Longleftrightarrow}}\; 
    \; \sigma_u(l)\leq k^{\perp}\\
    &\smash{\stackrel{\phantom{\text{Remark \ref{remFouldef}}}}%
    { \Longleftrightarrow}}\; \; \sigma_u(l)\perp k.
\end{align*}
Hence $\sigma_u$ is self-adjoint linear. We also have by (A3):
\begin{align*}
    \sigma_u(\sigma_u(k))=u\bullet (u\bullet k))=(u\cdot u)\bullet k=%
    u\bullet k=\sigma_u(k).
\end{align*}
We conclude that $\sigma_u$ is idempotent.

Since $k\leq e$ in $\sai{S}$ we obtain from Claim 2 in Theorem \ref{thmFoulismod} 
that $\sigma_{u}(k)=(u\cdot k)^{\perp\perp}\leq (u\cdot e)^{\perp\perp}=%
u^{\perp\perp}=u$, i.e., $\image \sigma_u\subseteq \downset u$. 
Conversely, let $k\in \downset u$. Then $u\cdot k=k$. Hence also 
$\sigma_{u}(k)=(u\cdot k)^{\perp\perp}=k^{\perp\perp}=k$, 
i.e., $\downset u\subseteq\image \sigma_u$. 
\end{proof}

\section{Conclusions}\label{Conclusions}

Our research demonstrates that, analogous to the findings in \cite{Jac}, $\Cat{OMLatLin}$ is a dagger category. We associated with every orthomodular lattice $X$ a Foulis m-semilattice $\Cat{Lin}(X)$ of endomorphisms of $X$, which serves as a quantale. $X$ then becomes a left $\Cat{Lin}(X)$-module. Consequently, we introduced a natural fuzzy-theoretic aspect to the theory of orthomodular lattices.

For future research, we intend to explore several problems, including:
\begin{enumerate}
    \item Following \cite{Jac}, we plan to show that 
    \begin{enumerate}
        \item the category $\Cat{OMLatLin}$ is a dagger kernel category, and
        \item for a dagger kernel category \Cat{D}, the kernel subobject functor $\KSub(-)$ is a functor $\cat{D} \rightarrow \Cat{OMLatLin}$.
    \end{enumerate}
    
    \item Our analysis revealed that $\Cat{OMLatLin}$ behaves 
    as a quantaloid as introduced in \cite{rosenthal2}. We aim to show 
    that $\Cat{OMLatLin}$ is an involutive semi-quantaloid.
\end{enumerate}

\section*{Acknowledgment}
The first author acknowledges support from the Czech Science Foundation (GAČR) project 23-09731L ``Representations of algebraic semantics for substructural logics''. The research of the second author
was funded in part by the Austrian Science Fund (FWF) 10.55776/PIN5424624 and the Czech Science Foundation
(GACR) 25-20013L. The third author acknowledges support from the Masaryk University project MUNI/A/1457/2023.


\end{document}